\documentclass[12pt]{amsart}
\usepackage{amsmath,amsthm,amssymb,mathtools}
\usepackage{enumitem}
\usepackage[textheight=8.8in, textwidth=6.75in]{geometry}
\usepackage{microtype}
\usepackage[hidelinks]{hyperref}

\numberwithin{equation}{section}
\newtheorem{theorem}{Theorem}[section]
\newtheorem{proposition}{Proposition}
\newtheorem{corollary}[theorem]{Corollary}
\newtheorem{lemma}[theorem]{Lemma}
\theoremstyle{definition}
\newtheorem{definition}{Definition}

\theoremstyle{remark}
\newtheorem{remark}{Remark}

\usepackage{soul}
\usepackage{xcolor}

\title[Chebyshev quotients and Demazure multiplicities]{Chebyshev quotients, Demazure multiplicities, and Dyck-path models}
\author{Rekha Biswal, Ken Ono, and Jujian Zhang}

\address{$^{1}$ School of Mathematical Sciences\\
National Institute of Science Education and Research (NISER)\\
Bhubaneswar 752050, India}

\address{$^{2}$ Homi Bhabha National Institute\\
Training School Complex\\
Anushakti Nagar, Mumbai 400094, India}

\email{rekhabiswal27@gmail.com}
\email{rekha@niser.ac.in}

\address{Axiom Math, 124 University Avenue, Palo Alto, CA 94301}
\email{ken@axiommath.ai}
\email{jujian@axiommath.ai}

\date{}
\keywords{Demazure flag, fusion product, Chebyshev polynomial, Dyck path}
\subjclass[2020]{17B67, 05A15, 33C45, 11P84}

\begin{document}

\begin{abstract}
We study Chebyshev quotients that arise in the representation theory of Lie algebras, specifically within the theory of Demazure flags for fusion products of
 $\mathfrak{sl}_2[t]$-modules.  Using a recent formula that expresses numerical Demazure multiplicities as
coefficients of such quotients, we prove a general eventual non-negativity
theorem for the same rational functions that compute these multiplicities:
each quotient either terminates or has strictly positive coefficients for
sufficiently large degrees, which we in turn interpret in terms of matchings
and bounded walks.
In several natural infinite
families, these are unsigned bounded Dyck path models, giving both a structural explanation for the observed positivity phenomenon
and concrete combinatorial models for key families of Demazure multiplicities.
The theorems in this paper were autonomously produced and formalized in Lean/Mathlib
 by AxiomProver from natural-language statements.
\end{abstract}

\maketitle

\section{Introduction}

Fusion products for $\mathfrak{sl}_2[t]$, in the sense of Chari and Venkatesh, give a
representation-theoretic framework for many finite-dimensional graded current-algebra modules,
including local Weyl modules and Demazure modules
\cite{BiswalChariSchneiderViswanath2016,ChariVenkatesh2015,FeiginLoktev1999}.  Let
\[
\xi=(\xi_1\ge \cdots \ge \xi_\ell>0)
\]
be a partition, and write $|\xi|=\xi_1+\cdots+\xi_\ell$.  We denote by $V(\xi)$ the
corresponding fusion product.  If $m\ge \xi_1$, then $V(\xi)$ admits a level-$m$ Demazure
flag, also called an {\it excellent filtration}, and the multiplicities of the graded shifts of
$D(m,n)$ are independent of the chosen flag (see, for example,
\cite{ChariSchneiderShereenWand2014}).

The graded multiplicity polynomials arising from these filtrations encode refined structural
information about the modules.  They also interact with tensor product decompositions of
irreducible integrable highest weight modules over affine Lie algebras through character
identities, filtrations, and recursive structures; see \cite{JakelicMoura2018}.  Explicit
combinatorial interpretations of their coefficients can therefore make these multiplicities more
transparent and more computable.

Following the notation of \cite{Biswal2026}, we write
\begin{equation}\label{eq:graded-multiplicity}
V_n^{\xi\to m}(q)=\sum_{p\ge 0}[V(\xi):\tau_p^*D(m,n)]\,q^p
\end{equation}
for the associated graded multiplicity polynomial.  The present paper concerns the numerical
specialization $q=1$.  The Chebyshev-quotient formula of \cite{Biswal2026}, recalled in
Proposition~\ref{prop:cheb-quotient}, shows that $V_n^{\xi\to m}(1)$ is obtained by extracting a
single coefficient from a rational function built from a simple Chebyshev-type polynomial
sequence.  Our aim is to analyze these rational functions directly: we prove an eventual
positivity dichotomy for their coefficient sequences, give an exact signed combinatorial formula
for those coefficients, and isolate families in which the signs cancel to give bounded Dyck-path
models.

The novelty of the present paper is not the Chebyshev-quotient formula itself, which is recalled from \cite{Biswal2026}, but rather the systematic analysis of the coefficient sequences that arise from that formula.  We prove that, after the evident cancellations, these quotients satisfy a sharp dichotomy: they either terminate or are eventually strictly positive. 
We then explain the coefficients by an explicit signed model involving matchings and bounded walks, and identify natural infinite families for which the quotient factors into Dyck-path-compatible pieces, yielding direct unsigned bounded-Dyck-path interpretations.
Thus, this paper turns the Chebyshev quotient from a compact character-theoretic expression into a structural and combinatorial description of numerical Demazure multiplicities. In this way, the results refine the Demazure-flag multiplicity formulas arising from the
Chari--Venkatesh framework by identifying the eventual sign behavior and by producing
explicit path models for natural infinite families of numerical multiplicities.

We use the polynomial sequence
\[
p_0(x)=p_1(x)=1,\qquad p_{r+1}(x)=p_r(x)-x p_{r-1}(x)\quad (r\ge 1),
\]
and, for a partition $\xi=(\xi_1\ge \cdots \ge \xi_\ell)$, set
\begin{equation}\label{eq:pxi}
p_\xi(x):=\prod_{i=1}^\ell p_{\xi_i}(x).
\end{equation}
For a power series $f(x)=\sum_{r\ge 0} c_r x^r$ and an integer $j$, we use the convention
$[x^j]f(x)=c_j$ if $j\ge 0$, and $[x^j]f(x)=0$ otherwise.  Throughout,
$\binom{A}{B}=0$ unless $0\le B\le A$. The following fact was proved by the first author in earlier work (see Theorem~2.3 of \cite{Biswal2026}).

\begin{proposition}\label{prop:cheb-quotient}
Let $\xi$ be a partition, let $m\ge \xi_1$, and let $\mu\in \mathbb Z_{\ge 0}$.  Write
\[
\mu=\mu_1m+\mu_0,\qquad 0\le \mu_0<m.
\]
Then
\[
V_\mu^{\xi\to m}(1)
=
[x^{(|\xi|-\mu)/2}]
\frac{p_{m-\mu_0-1}(x)p_\xi(x)}{p_m(x)^{\mu_1+1}}.
\]
The coefficient is understood to be $0$ if $(|\xi|-\mu)/2$ is not a nonnegative integer.
\end{proposition}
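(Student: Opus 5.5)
We would prove Proposition~\ref{prop:cheb-quotient} by first passing to characters and then carrying out a single-variable coefficient extraction. The first step is to use the Demazure-flag character identity. Since $V(\xi)$ has a level-$m$ Demazure flag, specializing at $q=1$ gives
\[
\operatorname{ch} V(\xi)=\sum_{\mu} V_\mu^{\xi\to m}(1)\,\operatorname{ch} D(m,\mu).
\]
The fusion product has the character of the tensor product $\bigotimes_i V(\xi_i)$ of evaluation modules. We record characters of $\mathfrak{sl}_2$-modules as Laurent polynomials in $z$. In the parametrization $z+z^{-1}=x^{-1/2}$ (equivalently, $x=1/(z+z^{-1})^2$), the normalized Weyl character $\chi_r=(z^{r+1}-z^{-r-1})/(z-z^{-1})$ satisfies $\chi_{r}=(z+z^{-1})\chi_{r-1}-\chi_{r-2}$. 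Comparing with $p_{r+1}=p_r-xp_{r-1}$, we see that $p_r(x)$ is (up to a power of $x^{1/2}$) the Chebyshev polynomial $x^{r/2}\chi_r$. Hence $\operatorname{ch}V(\xi)$ corresponds to $x^{-|\xi|/2}p_\xi(x)$.

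The second step is the character of $D(m,\mu)$ for $\mu=\mu_1 m+\mu_0$. As an $\mathfrak{sl}_2$-module,
\[
D(m,\mu)\cong V(m)^{\otimes\mu_1}\otimes V(\mu_0),
\]
which is the fusion product of the partition $(m^{\mu_1},\mu_0)$ (Chari--Venkatesh). Its character is therefore $x^{-\mu/2}p_m^{\mu_1}p_{\mu_0}$, in the same normalization.

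The third step, which I expect to be the main obstacle, is to invert the resulting triangular system. The natural tool is a ``dual'' basis trick: find functionals $\phi_\mu$ on characters with $\phi_\mu(\operatorname{ch} D(m,\nu))=\delta_{\mu\nu}$. I would guess the functional
\[
\phi_\mu(F)=[x^{(\cdot)}]\,\frac{p_{m-\mu_0-1}}{p_m^{\mu_1+1}}\,F
\]
and verify orthogonality as follows. The product $p_{\mu_0}p_{m-\mu_0-1}$ relates to $p_m$ modulo lower terms through the Chebyshev product identity $\chi_a\chi_b=\sum\chi_{a+b-2k}$. In particular, $p_{\nu_0}p_{m-\mu_0-1}/p_m^{\mu_1+1-\nu_1}$ must have vanishing coefficient at the relevant degree unless $\nu=\mu$. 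The degree bookkeeping works out, since $\deg p_r=\lfloor r/2\rfloor$ and the exponent $(|\xi|-\mu)/2$ matches the homogenization $x^{-|\xi|/2}$ versus $x^{-\mu/2}$. The hard part is checking the vanishing precisely, for example via the residue/contour formulation in $z$. The functional becomes a constant-term extraction, and $p_m$ vanishing at the $2m+2$-th roots of unity gives the orthogonality, much like the level-$m$ Verlinde-type orthogonality.

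If the direct functional is awkward, I would instead follow the route in \cite{Biswal2026}. That argument proceeds by induction on $\ell(\xi)$, using the known recursion for multiplicities of $D(m,\nu)$ inside $D(m,\nu')\otimes V(\xi_\ell)$ (a Pieri-type rule for Demazure flags). One then checks that the right-hand side obeys the same recursion when $p_\xi$ is multiplied by $p_{\xi_\ell}$, using only the three-term recurrence. The base case $\xi=\emptyset$ reduces to the trivial coefficient identity.
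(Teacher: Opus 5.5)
The paper itself does not prove this proposition; it quotes it from Theorem~2.3 of \cite{Biswal2026}, so your argument has to stand on its own. There is a genuine gap: you never establish the one orthogonality that carries the content. Your first two steps are correct and reduce the statement cleanly. At $q=1$ the flag gives
\[
p_\xi(x)=\sum_{\nu}V_\nu^{\xi\to m}(1)\,x^{(|\xi|-\nu)/2}\,p_m(x)^{\nu_1}p_{\nu_0}(x).
\]
Multiplying by $p_{m-\mu_0-1}/p_m^{\mu_1+1}$ and extracting $[x^{(|\xi|-\mu)/2}]$ shows that the proposition is equivalent to
\[
[x^{(\nu-\mu)/2}]\,p_{m-\mu_0-1}(x)\,p_{\nu_0}(x)\,p_m(x)^{\nu_1-\mu_1-1}=\delta_{\mu\nu}\qquad(\nu\equiv\mu \bmod 2).
\]
Degree counting settles the cases $\nu<\mu$, $\nu=\mu$, and $\nu_1>\mu_1$; in the last case the left side is a polynomial of degree at most $(\nu-\mu-1)/2$. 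The remaining case is $\nu_1=\mu_1$ with $\nu_0\ge\mu_0+2$. There $p_m$ does not cancel, and you must show $[x^N]\,p_{\nu_0}p_{m-1-\mu_0}/p_m=0$ for $N=(\nu_0-\mu_0)/2$. You leave this as ``the hard part,'' and the mechanism you suggest is not the operative one. What is needed is the vanishing of a single low-order Taylor coefficient at $x=0$, not a finite Verlinde-type orthogonality over the zeros of $p_m$. Also, the product you propose to feed into the Clebsch--Gordan rule, $p_{\mu_0}p_{m-\mu_0-1}$, is the wrong one.

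The missing ingredient is the Clebsch--Gordan identity $\chi_{\nu_0}\chi_{m-1-\mu_0}=\chi_m\chi_{\nu_0-\mu_0-1}+\chi_{\mu_0}\chi_{m-1-\nu_0}$ for $0\le\mu_0<\nu_0\le m-1$. Both sides are the string of $\chi_j$ with $j$ running in steps of $2$ from $m-1+\nu_0-\mu_0$ down to $|m-1-\nu_0-\mu_0|$. In your normalization it reads
\[
p_{\nu_0}(x)\,p_{m-1-\mu_0}(x)=p_m(x)\,p_{\nu_0-\mu_0-1}(x)+x^{\nu_0-\mu_0}\,p_{\mu_0}(x)\,p_{m-1-\nu_0}(x).
\]
After dividing by $p_m$, the first term is a polynomial of degree at most $N-1$ and the second is $O(x^{2N})$. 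Since $N\ge 1$, the coefficient of $x^N$ vanishes. Equivalently, under $x=w/(1+w)^2$ one has $p_r=(1-w^{r+1})/\bigl((1-w)(1+w)^r\bigr)$. Lagrange inversion then turns the coefficient into $[w^N]\,(1-w^{\nu_0+1})(1-w^{m-\mu_0})/(1-w^{m+1})$, which is $0$ because $1\le N<\min(\nu_0+1,m-\mu_0)$. With this identity your first route becomes a complete, self-contained proof. Your fallback does not avoid the issue. At $q=1$, a Pieri-type rule for tensoring with $V(\xi_\ell)$ is itself a decomposition of characters into the $\operatorname{ch}D(m,\nu)$ and needs the same inversion. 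Its attribution to \cite{Biswal2026} is also a guess.
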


Proposition~~\ref{prop:cheb-quotient} is the bridge from Demazure flags to the Chebyshev-polynomial side.
For fixed representation-theoretic data $(\xi,m,\mu)$, the numerical Demazure
multiplicity $V^{\xi\to m}_{\mu}(1)$ is exactly the coefficient of degree
$(|\xi|-\mu)/2$ in the quotient
\[
\frac{p_{m-\mu_0-1}(x)p_{\xi}(x)}{p_m(x)^{\mu_1+1}}.
\]
Thus, every statement about the coefficient sequence of this quotient is, in
particular, a statement about the Chebyshev expression that computes the
corresponding Demazure-flag multiplicity. Theorem~\ref{thm:main} should be understood in
this sense: it identifies the eventual coefficient behavior of precisely the
rational functions that occur in the Demazure multiplicity formula. Theorem~\ref{thm:comb}
then explains these coefficients by a signed matching-and-walk model, while
Theorem~\ref{thm:manifest} identifies natural families in which this signed model becomes an
unsigned bounded-Dyck-path model. Finally, Corollary~\ref{cor:manifest-mult} translates these
quotient identities back into direct formulas for numerical Demazure
multiplicities.

Theorem~1.1 is the structural result that underlies the rest of the paper.
It applies not to an auxiliary family of generating functions, but directly to
the Chebyshev quotients appearing in Proposition~1. Consequently, when
$m\geq \xi_1$, it describes the eventual coefficient behavior of the very
rational functions whose coefficient extractions compute the numerical
Demazure multiplicities $V^{\xi\to m}_{\mu}(1)$.

\subsection{Background, definitions, and statements}

We begin with the basic combinatorial objects used in the second theorem.

\begin{definition}
For an integer $r\ge 0$, let $P_r$ denote the path graph on vertices
$1,2,\dots,r$, with edge set
\[
\{\{i,i+1\}:1\le i<r\}.
\]
When $r=0$, we interpret $P_0$ as the empty graph.
A \emph{matching} in $P_r$ is a set of pairwise disjoint edges. We write $m_r(j)$ for the number
of matchings in $P_r$ having exactly $j$ edges.
\end{definition}

\begin{definition}
Fix $m\ge 1$ and integers $0\le a,b\le m-1$.
A \emph{strip walk of height $m-1$ from $a$ to $b$} is a finite sequence
\[
(h_0,h_1,\dots,h_L)
\]
of integers such that
\[
h_0=a,\qquad h_L=b,\qquad h_{i+1}-h_i\in\{\pm 1\}\ \text{for all }i,
\]
and
\[
0\le h_i\le m-1\qquad (0\le i\le L).
\]
We denote by $w_L^{(m)}(a,b)$ the number of such walks of length $L$.
When $a=0$ and $b=m-1$, we call these full-height strip walks.
\end{definition}

\begin{definition}
If $\gamma$ is a full-height strip walk of height $m-1$ and length $L$, then $L\equiv m-1\pmod 2$.
We define the \emph{excess} of $\gamma$ by
\[
e(\gamma):=\frac{L-(m-1)}{2}.
\]
Thus $e(\gamma)$ is a nonnegative integer.
More generally, if $\gamma$ is a strip walk from $a$ to $b$ with $0\le a\le b\le m-1$ and length $L$,
then $L\equiv b-a\pmod 2$, and we define
\[
e(\gamma):=\frac{L-(b-a)}{2}.
\]
Again $e(\gamma)$ is a nonnegative integer.
\end{definition}

\begin{definition}
A \emph{Dyck path} is a lattice path in $\mathbb Z^2$ starting at $(0,0)$, ending on the
$x$-axis, using up-steps $U=(1,1)$ and down-steps $D=(1,-1)$, and never going below the
$x$-axis; see, for example, \cite{Stanley2015}. Its \emph{semilength} is half of its total
number of steps. The \emph{height} of a Dyck path is the largest $y$-coordinate reached
along the path.
\end{definition}

Fix $m\ge 1$ and integers $0\le a,b\le m-1$ with $a+b\le m-1$. For $u\ge 0$, let
$\mathcal D_m(a,b;u)$ be the set of Dyck paths of height at most $m-1$ and semilength
$m-1-b+u$ whose first $a$ steps are up-steps and whose last $m-1-b$ steps are down-steps.
We write
\begin{equation}
D_m(a,b;u):=\#\mathcal D_m(a,b;u).
\end{equation}
In particular, $\mathcal D_m(0,0;u)$ is the set of Dyck paths of height at most $m-1$ and
semilength $m-1+u$ whose last $m-1$ steps are down-steps.

The first main result is the eventual-positivity dichotomy for these quotient coefficients.

\begin{theorem}\label{thm:main}
Fix $m\ge 1$, let $\xi$ be a partition with $\xi_i\le m$ for all $i$, and let
$\mu\in\mathbb Z_{\ge 0}$. Write
\[
\mu=\mu_1m+\mu_0,\qquad 0\le \mu_0<m,
\]
and set
\[
F(x):=F_{\xi,m,\mu}(x)=\frac{p_{m-\mu_0-1}(x)p_\xi(x)}{p_m(x)^{\mu_1+1}}=\sum_{r\ge 0}a_r x^r.
\]

If we let 
$t:=\#\{i: \xi_i=m\},$
then the following are true:
\smallskip

\noindent
(1) If $m=1$, then $F_{\xi,1,\mu}(x)=1$.

\noindent
(2) Assume that $m\ge 2$.
  \begin{enumerate}
  \item[{\rm {(a)}}] If $t\ge \mu_1+1$, then $F_{\xi,m,\mu}(x)$ is a polynomial. In particular,
  $a_r=0$ for all sufficiently large $r$.
  \item[{\rm {(b)}}] If $t\le \mu_1$, then $a_r>0$ for all sufficiently large $r$.
  \end{enumerate}
\end{theorem}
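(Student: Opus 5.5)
The plan is to make everything explicit through the Chebyshev substitution and then run a dominant-singularity (partial fractions) argument. First, I would verify by induction on $r$ that
\[
p_r(x)=x^{r/2}\,U_r\!\left(\tfrac{1}{2\sqrt{x}}\right),
\]
where $U_r$ is the Chebyshev polynomial of the second kind. The initial cases $p_0=p_1=1$ check directly, and the recursion $U_{r+1}(s)=2sU_r(s)-U_{r-1}(s)$ becomes exactly $p_{r+1}=p_r-xp_{r-1}$. The zeros of $U_r$ are $\cos(k\pi/(r+1))$, so the roots of $p_r$ are the positive reals
\[
x^{(r)}_k=\frac{1}{4\cos^2(k\pi/(r+1))},\qquad 1\le k\le \lfloor r/2\rfloor .
\]
They are all simple, and the smallest is $x^{(r)}_1$. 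Since $\cos(\pi/(r+1))$ increases with $r$, for every $r<m$ all roots of $p_r$ are strictly larger than $\rho:=x^{(m)}_1$. Here $\rho$ is the smallest root of $p_m$, and it exists when $m\ge 2$. In particular, each $p_r$ with $r<m$ has no roots in $[0,\rho]$ and is therefore strictly positive there, because $p_r(0)=1$.

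Part (1) is immediate: $m=1$ forces $\mu_0=0$ and $\xi_i=1$ for all $i$, and $p_0=p_1=1$. For (2), I would cancel the factors $p_{\xi_i}=p_m$ with $\xi_i=m$ to write
\[
F=\frac{N(x)}{p_m(x)^{s}},\qquad N(x):=p_{m-\mu_0-1}(x)\prod_{\xi_i<m}p_{\xi_i}(x),\qquad s:=\mu_1+1-t.
\]
If $t\ge\mu_1+1$, then $s\le 0$ and $F$ is a polynomial, which gives (a).

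For (b), we have $s\ge1$. The factor $N$ is a product of $p_r$'s with $r<m$; note that $m-\mu_0-1\le m-1$, and the case $p_0=1$ is harmless. Hence $N(\rho)>0$ by the first paragraph. Since $p_m(0)=1$, $p_m$ has no root in $[0,\rho)$, and $\rho$ is a simple root, we get $p_m>0$ on $[0,\rho)$ and $c:=-p_m'(\rho)>0$. Now I would take the partial-fraction decomposition of the rational function $F$ over its poles. These are among the roots $x^{(m)}_k$, all of which are positive reals, and $\rho$ is the unique pole of smallest modulus. Its pole order is exactly $s$, because $N(\rho)\neq0$. The principal part at $\rho$ has leading term
\[
\frac{N(\rho)}{(c\rho)^s}\,(1-x/\rho)^{-s},
\]
using $p_m(x)=c\rho\,(1-x/\rho)\bigl(1+O(x-\rho)\bigr)$. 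Its coefficient of $x^r$ is
\[
\frac{N(\rho)}{(c\rho)^s}\binom{r+s-1}{s-1}\rho^{-r}.
\]

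The lower-order terms at $\rho$ contribute $O(r^{s-2}\rho^{-r})$. Each other pole $x^{(m)}_k>\rho$ contributes $O(r^{s-1}(x^{(m)}_k)^{-r})$, which is exponentially smaller than $\rho^{-r}$. The polynomial part of the decomposition affects only finitely many coefficients. Hence
\[
a_r\sim \frac{N(\rho)}{(c\rho)^s\,(s-1)!}\,r^{s-1}\rho^{-r}>0,
\]
so $a_r>0$ for all large $r$.

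The main obstacle is the sign bookkeeping at the dominant pole. One needs the strict inequality $\rho<x^{(r)}_1$ for every $r<m$, so that $N(\rho)$ is positive and not merely nonzero. One also needs the simplicity of $\rho$ together with the positivity of $p_m$ on $[0,\rho)$, so that the leading constant is positive. Both facts follow from the explicit cosine formula for the roots, so I would establish that formula carefully first.
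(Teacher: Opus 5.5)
Your proposal is correct and follows essentially the same route as the paper: you use the Chebyshev substitution to locate the simple positive roots of $p_m$, cancel the $t$ factors $p_m$, and run a dominant-pole partial-fraction argument at the smallest root $\rho_1$ with a positive leading coefficient. The only difference is minor: you obtain $N(\rho_1)>0$ by showing that every root of $p_r$ with $r<m$ exceeds $\rho_1$, whereas the paper evaluates $p_r(\rho_1)=\rho_1^{r/2}\sin((r+1)\theta)/\sin\theta$ directly and writes the leading constant as $\Phi(\rho_1)$ rather than $N(\rho_1)/(c\rho_1)^s$.
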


In representation-theoretic terms, Theorem~\ref{thm:main} separates the Chebyshev
quotients arising from Demazure flags into two cases. If sufficiently many
parts of $\xi$ are equal to the level $m$, then the denominator
$p_m(x)^{\mu_1+1}$ is cancelled and the quotient is a polynomial; consequently
only finitely many possible coefficient extractions can be nonzero. Otherwise
the quotient has a genuine pole at the smallest positive root of $p_m$, and
the resulting coefficient sequence is eventually strictly positive. Since
Proposition~1 identifies the relevant Demazure multiplicity with one
coefficient of this same quotient, Theorem~\ref{thm:main} gives a structural explanation
for why the Chebyshev expressions governing these multiplicities have eventual
positivity after the evident cancellations.

The next theorem gives the promised combinatorial description of the coefficients.
To state it, we use the notation of Theorem~\ref{thm:main}, and assume
\[
k:=\mu_1+1-t\ge 0.
\]
Let
\[
\alpha_0:=m-\mu_0-1,
\]
and let
\[
\alpha_1,\dots,\alpha_L
\]
be the parts of $\xi$ that are strictly smaller than $m$, listed with multiplicity. Thus
\[
p_{m-\mu_0-1}(x)\prod_{\xi_i<m}p_{\xi_i}(x)=\prod_{i=0}^L p_{\alpha_i}(x),
\]
and
\[
F_{\xi,m,\mu}(x)=\frac{\prod_{i=0}^L p_{\alpha_i}(x)}{p_m(x)^k}.
\]
For $u\ge 0$, we define
\[
B_m(u):=w_{m-1+2u}^{(m)}(0,m-1).
\]
In words, $B_m(u)$ is the number of full-height strip walks whose excess is $u$.

\begin{theorem}\label{thm:comb}
With the notation above, for every $r\ge 0$ one has
\[
a_r=
\sum_{\substack{j_0,\dots,j_L\ge 0\\ u_1,\dots,u_k\ge 0\\
 j_0+\cdots+j_L+u_1+\cdots+u_k=r}}
(-1)^{j_0+\cdots+j_L}
\left(\prod_{i=0}^L \binom{\alpha_i-j_i}{j_i}\right)
\left(\prod_{\nu=1}^k B_m(u_\nu)\right).
\]
Here, the second product is interpreted as $1$ when $k=0$.
Equivalently, $a_r$ is the signed count of tuples
\[
(M_0,M_1,\dots,M_L,\gamma_1,\dots,\gamma_k)
\]
with the following properties:
\begin{enumerate}
\item for each $0\le i\le L$, the object $M_i$ is a matching in the path graph $P_{\alpha_i}$;
\item for each $1\le \nu\le k$, the object $\gamma_\nu$ is a full-height strip walk of height $m-1$;
\item the total weight condition
\[
|M_0|+|M_1|+\cdots+|M_L|+e(\gamma_1)+\cdots+e(\gamma_k)=r
\]
holds;
\item the sign of such a tuple is
\[
(-1)^{|M_0|+|M_1|+\cdots+|M_L|}.
\]
\end{enumerate}
\end{theorem}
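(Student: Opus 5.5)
The proof reduces to two generating-function identities, one for each factor type, followed by a Cauchy-product expansion. Concretely, I would show:
\[
p_\alpha(x)=\sum_{j\ge 0}(-1)^j\binom{\alpha-j}{j}x^j=\sum_{j\ge 0}(-1)^j m_\alpha(j)\,x^j
\qquad(\alpha\ge 0),
\]
and
\[
\frac{1}{p_m(x)}=\sum_{u\ge 0}B_m(u)\,x^u .
\]
Granting these, $F_{\xi,m,\mu}(x)=\prod_{i=0}^L p_{\alpha_i}(x)\cdot p_m(x)^{-k}$. Extracting the coefficient of $x^r$ by the Cauchy product formula gives exactly the stated sum over $j_0,\dots,j_L,u_1,\dots,u_k$ with total $r$. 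The signed-tuple reformulation then follows by replacing $\binom{\alpha_i-j_i}{j_i}$ with the number of $j_i$-edge matchings of $P_{\alpha_i}$, and $B_m(u_\nu)$ with the number of full-height strip walks of excess $u_\nu$. Here I use the reduction to $\prod p_{\alpha_i}/p_m^k$ already recorded before the statement, with $k=\mu_1+1-t\ge 0$; the $t$ factors $p_m$ from parts equal to $m$ cancel.

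For the first identity I would argue by induction on $\alpha$ through the recurrence $p_{r+1}=p_r-xp_{r-1}$. The base cases are $p_0=p_1=1$, and $\binom{-j}{j},\binom{1-j}{j}$ vanish for $j\ge1$ under the paper's convention. The inductive step is Pascal's rule $\binom{r+1-j}{j}=\binom{r-j}{j}+\binom{r-j}{j-1}$. This is also the standard matching recursion $m_{r+1}(j)=m_r(j)+m_{r-1}(j-1)$, obtained by conditioning on whether vertex $r+1$ is matched. That same recursion gives $m_\alpha(j)=\binom{\alpha-j}{j}$ directly, including $\alpha=0$.

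The second identity is the main step. I would use the transfer-matrix method on the path graph with vertices $0,\dots,m-1$. Let $A$ be its adjacency matrix. The walk generating function is $\sum_L w^{(m)}_L(0,m-1)z^L=[(I-zA)^{-1}]_{0,m-1}$. By the cofactor formula, the $(0,m-1)$ entry equals $z^{m-1}/\det(I-zA)$, because deleting row $m-1$ and column $0$ of the tridiagonal matrix $I-zA$ leaves a triangular matrix with diagonal entries $-z$, up to sign. Let $d_r=\det(I-zA_r)$ for the path on $r$ vertices. Cofactor expansion gives $d_{r+1}=d_r-z^2d_{r-1}$ with $d_0=d_1=1$, so $d_r=p_r(z^2)$.

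Hence
\[
\sum_L w_L^{(m)}(0,m-1)z^L=\frac{z^{m-1}}{p_m(z^2)} .
\]
Only $L=m-1+2u$ contribute, so dividing by $z^{m-1}$ and setting $x=z^2$ yields $1/p_m(x)=\sum_u B_m(u)x^u$. The points needing care are the sign bookkeeping in the cofactor (the sign $(-1)^{m-1}$ cancels against $(-z)^{m-1}$) and the degenerate case $m=1$. In that case $p_1=1$ and $B_1(u)=\delta_{u,0}$, since the only walk has length $0$; this is consistent with the formula.

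Alternatively, one could avoid determinants by a first-step decomposition of strip walks. That route would give the same Chebyshev-type recurrence for the generating functions of walks confined to strips of increasing height.
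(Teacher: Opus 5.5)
Your proposal is correct and follows essentially the same route as the paper: it expands $p_\alpha$ as a matching polynomial via the three-term recurrence, proves $1/p_m(x)=\sum_u B_m(u)x^u$ by the transfer-matrix method (Neumann series, Cramer's rule, and the continuant identity $\det(I-zA_m)=p_m(z^2)$), and then extracts coefficients from the Cauchy product. The differences are cosmetic. You count matchings with the last-vertex recursion instead of the paper's gap-shifting bijection, and you compute only the $(0,m-1)$ cofactor instead of the general $(a,b)$ inverse entry.
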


\begin{remark}
For $m\ge 2$, Theorem~\ref{thm:main} says that the only obstruction to strict eventual
positivity is whether enough copies of $p_m$ cancel to make the rational function $F(x)$
a polynomial. Theorem~\ref{thm:comb} explains why positivity is not obvious term-by-term:
in general the coefficients are not counting a single family of ordinary partitions or paths,
but rather a signed combination of matchings and bounded walks.
\end{remark}

The next result identifies families for which the quotient admits a factorization compatible with the denominator exponent $k$, leading to unsigned Dyck-path models.

\begin{theorem}
\label{thm:manifest}
Assume the notation in Theorem~\ref{thm:comb}, so that
\[
F_{\xi,m,\mu}(x)=\frac{\prod_{i=0}^L p_{\alpha_i}(x)}{p_m(x)^k}.
\]
Assume that there exist integers
\[
(a_\nu,b_\nu)\qquad (1\le \nu\le k)
\]
such that
\[
0\le a_\nu,b_\nu\le m-1,\qquad a_\nu+b_\nu\le m-1
\]
for every $\nu$, and such that
\[
\prod_{i=0}^L p_{\alpha_i}(x)=\prod_{\nu=1}^k p_{a_\nu}(x)p_{b_\nu}(x).
\]
Because $p_0(x)=p_1(x)=1$, this factorization condition depends only on the polynomial product
\[
\prod_{i=0}^L p_{\alpha_i}(x),
\]
rather than on the particular presentation of that product by the multiset
\[
\{\alpha_0,\dots,\alpha_L\}.
\]
However, the existence of a decomposition into exactly $k$ admissible pairs $(a_\nu,b_\nu)$ is itself a nontrivial restriction.
Then we have
\[
F_{\xi,m,\mu}(x)=\prod_{\nu=1}^k\left(\sum_{u\ge 0} D_m(a_\nu,b_\nu;u)\,x^u\right).
\]
In particular, $a_r$ is the number of $k$-tuples
\[
(P_1,\dots,P_k)
\]
in which each $P_\nu$ belongs to $\mathcal D_m(a_\nu,b_\nu;u_\nu)$ for some $u_\nu\ge 0$ and
\[
u_1+\cdots+u_k=r.
\]
Hence all coefficients $a_r$ are nonnegative.

\smallskip
\noindent
Moreover, this criterion produces the following explicit infinite families of unsigned quotient
identities.  In these three families, the quotient is $F_{\xi,m,\mu}(x)$ with $\mu=|\xi|$.
A notation such as $m^t$ records a multiset of parts; the corresponding partition is arranged
in nonincreasing order.
\begin{enumerate}
\item[\textup{(a)}] Let
\[
\xi=(m^t,1^s),
\]
and write
\[
s=qm+\rho,\qquad q\in\mathbb Z_{\ge 0},\quad 0\le \rho<m.
\]
Then, for every $N\ge 0$, $a_N$ counts tuples
\[
(P_0,P_1,\dots,P_q)
\]
with the following properties:
\begin{enumerate}[label=\textup{(\roman*)}]
\item $P_0\in \mathcal D_m(0,m-\rho-1;u_0)$, so $P_0$ is a Dyck path of height at most $m-1$
      and semilength $\rho+u_0$ whose last $\rho$ steps are down-steps;
\item for $1\le \nu\le q$, one has $P_\nu\in \mathcal D_m(0,0;u_\nu)$, so $P_\nu$ is a Dyck
      path of height at most $m-1$ and semilength $m-1+u_\nu$ whose last
      $m-1$ steps are down-steps;
\item $u_0+u_1+\cdots+u_q=N$.
\end{enumerate}

\item[\textup{(b)}] Let
\[
\xi=(m^t,r,1^s),
\qquad 1\le r\le m-1.
\]
Write
\[
r+s=qm+\rho,\qquad q\in\mathbb Z_{\ge 0},\quad 0\le \rho<m.
\]
If $q=0$, equivalently if $r+s<m$, then, for every $u\ge 0$, $a_u$ counts Dyck paths in
$\mathcal D_m(r,m-r-s-1;u)$.  Equivalently, $a_u$ is the number of Dyck paths of height at most
$m-1$ and semilength $r+s+u$ whose first $r$ steps are up-steps and whose last $r+s$ steps are
down-steps.

If $q\ge 1$, then, for every $N\ge 0$, $a_N$ counts tuples
\[
(P_0,P_1,\dots,P_q)
\]
with the following properties:
\begin{enumerate}[label=\textup{(\roman*)}]
\item $P_0\in \mathcal D_m(0,m-\rho-1;u_0)$;
\item $P_1\in \mathcal D_m(r,0;u_1)$, so $P_1$ is a Dyck path of height at most $m-1$ and
      semilength $m-1+u_1$ whose first $r$ steps are up-steps and whose last
      $m-1$ steps are down-steps;
\item for $2\le \nu\le q$, one has $P_\nu\in \mathcal D_m(0,0;u_\nu)$;
\item $u_0+u_1+\cdots+u_q=N$.
\end{enumerate}

\item[\textup{(c)}] More generally, let
\[
\xi=(m^t,r_1,\dots,r_d,1^s),
\qquad 1\le r_i\le m-1,
\]
and write
\[
r_1+\cdots+r_d+s=qm+\rho,\qquad q\in\mathbb Z_{\ge 0},\quad 0\le \rho<m.
\]
If $q\ge d$, then, for every $N\ge 0$, $a_N$ counts tuples
\[
(P_0,P_1,\dots,P_q)
\]
with the following properties:
\begin{enumerate}[label=\textup{(\roman*)}]
\item $P_0\in \mathcal D_m(0,m-\rho-1;u_0)$;
\item for $1\le i\le d$, one has $P_i\in \mathcal D_m(r_i,0;u_i)$;
\item for $d+1\le \nu\le q$, one has $P_\nu\in \mathcal D_m(0,0;u_\nu)$;
\item $u_0+u_1+\cdots+u_q=N$.
\end{enumerate}
\end{enumerate}
\end{theorem}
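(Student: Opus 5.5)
The core of the argument is a single generating-function identity: for $0\le a,b\le m-1$ with $a+b\le m-1$,
\[
\sum_{u\ge 0} D_m(a,b;u)\,x^u=\frac{p_a(x)\,p_b(x)}{p_m(x)}. \tag{$\ast$}
\]
Once ($\ast$) is established, the main formula follows by multiplying over $\nu=1,\dots,k$. The factorization hypothesis gives
\[
F_{\xi,m,\mu}(x)=\frac{\prod_{i=0}^L p_{\alpha_i}}{p_m^k}=\prod_{\nu=1}^k\frac{p_{a_\nu}p_{b_\nu}}{p_m}.
\]
Extracting the coefficient of $x^r$ from this product counts $k$-tuples of paths with total excess $r$, which also gives nonnegativity.

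To prove ($\ast$) I would use the transfer-matrix description of walks in a strip. Work with the substitution $x=z^2$. Let $A$ be the adjacency matrix of the path $P_m$ on heights $0,\dots,m-1$. The standard cofactor formula gives
\[
\sum_L w^{(m)}_L(i,j)\,z^L=\big[(I-zA)^{-1}\big]_{ij}.
\]
For a tridiagonal matrix with unit off-diagonal entries, the principal minors of $I-zA$ are exactly the polynomials $p_r(z^2)$. Indeed, $\det(I-zA_r)=p_{r+1}(z^2)$ in the paper's indexing, since both sides satisfy the recurrence $D_{r}=D_{r-1}-z^2D_{r-2}$. From this I expect
\[
\sum_L w^{(m)}_L(0,c)\,z^L=\frac{z^{c}\,p_{m-c}(z^2)}{p_{m+1}(z^2)}
\]
up to a shift in the indexing of $p_m$. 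I would fix this shift carefully using small cases. The check $m=1$ requires the denominator to be $p_1=1$ for strip height $0$, so the convention needs care. It may be that $p_m$ here corresponds to a strip with $m-1$ levels, in which case the indexing works out directly.

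Next I would identify a path in $\mathcal D_m(a,b;u)$ with a strip walk. Strip off the forced initial $a$ up-steps and the forced final $m-1-b$ down-steps. What remains is a walk from height $a$ to height $m-1-b$ within $[0,m-1]$. Its length is $2(m-1-b+u)-a-(m-1-b)$, so its excess is $u$ relative to $(m-1-b)-a$. Thus
\[
D_m(a,b;u)=w^{(m)}_{m-1-b-a+2u}(a,m-1-b).
\]
The hypothesis $a+b\le m-1$ guarantees $a\le m-1-b$. The generating function of walks between two interior heights is the cofactor $\det(\text{block below }a)\cdot\det(\text{block above }m-1-b)\cdot z^{\text{distance}}$ divided by the full determinant. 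The block below height $a$ has $a$ vertices and the block above height $m-1-b$ has $b$ vertices, which yields $p_a\,p_b/p_m$ after dividing by the distance power and substituting $x=z^2$, once the index conventions are fixed. This bookkeeping of index shifts is the main obstacle, and I would pin it down by checking $m=2$: there $p_2=1-x$, and the paths in $\mathcal D_2(0,0;u)$ are the $(UD)$-strings, whose generating function is $1/(1-x)$, as expected.

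For the families (a)–(c), take $\mu=|\xi|$. Then $\mu_1=\lfloor|\xi|/m\rfloor$ and $\alpha_0=m-\mu_0-1$. In case (c), $|\xi|=tm+qm+\rho$, so $\mu_1=t+q$, $\mu_0=\rho$, $k=q+1$, and $\alpha_0=m-\rho-1$. The non-$m$ parts are $r_1,\dots,r_d$ and $1^s$, with $p_1=1$. I would choose the pairs
\[
(a,b)=(0,m-\rho-1),\quad (r_i,0)\ \text{for } 1\le i\le d,\quad (0,0)\ \text{for the remaining } q-d \text{ pairs}.
\]
These are admissible because $r_i\le m-1$, and the count works because $q\ge d$. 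Since $p_0=1$, the products match. Case (a) is $d=0$. Case (b) with $q\ge1$ is $d=1$. For case (b) with $q=0$, we have $k=1$, $\rho=r+s$, and $\alpha_0=m-r-s-1$, and I would take the single pair $(r,\,m-r-s-1)$, which is admissible since $r+(m-r-s-1)\le m-1$. Finally, translate each $\mathcal D_m$ description into the stated semilength and up/down conditions.
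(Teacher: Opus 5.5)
Your proposal is correct and follows the paper's route: the walk generating function $\sum_{r}w^{(m)}_{b-a+2r}(a,b)x^r=p_a(x)p_{m-1-b}(x)/p_m(x)$ via Cramer's rule for $I-zA_m$ (Proposition~\ref{prop:inverse-entry} and Corollary~\ref{cor:walkquotient}), the strip-off bijection $\gamma\mapsto U^a w(\gamma) D^{m-1-b}$ onto $\mathcal D_m(a,b;u)$, multiplication of the $k$ factors, and the same pairs $(0,m-\rho-1)$, $(r_i,0)$, $(0,0)$ (and $(r,m-r-s-1)$ when $q=0$) for the families. One correction: there is no index shift to resolve, because $\det(I-zA_r)=p_r(z^2)$ for the path on $r$ vertices (both sides equal $1$ for $r=0,1$ and satisfy the same recurrence), so your intermediate claims $\det(I-zA_r)=p_{r+1}(z^2)$ and $z^c p_{m-c}(z^2)/p_{m+1}(z^2)$ are off by one, whereas your block-count conclusion $p_ap_b/p_m$ is exactly right.
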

\medskip

The factorization hypothesis is restrictive: it requires the numerator product to decompose into exactly $k$ admissible factors corresponding to the $k$ denominator copies of $p_m(x)$. The explicit families below are precisely situations where such a decomposition occurs naturally. In particular, when the number of numerator factors becomes large relative to the denominator exponent $k$, one cannot generally expect positivity for the coefficients of the corresponding Chebyshev quotient. The positivity phenomenon in Theorem~\ref{thm:manifest} is also related to positive linearization formulas for orthogonal polynomials. Certain families, including the Chebyshev polynomials, admit expansions
\[
p_n(x)p_m(x)=\sum_k c_{m,n}^k p_k(x),
\qquad c_{m,n}^k\ge 0,
\]
which allow products in the numerator to be reorganized combinatorially; see \cite{deSCV}.

We now return from the quotient level to the representation-theoretic
multiplicities themselves.

\begin{corollary}\label{cor:manifest-mult}
Fix $N\ge 0$ and set
\[
n:=|\xi|-2N.
\]
If $n\geq 0$, then we have the following formula for the Demazure multiplicity
\[
V_n^{\xi\to m}(1).
\]
\begin{enumerate}
\item[\textup{(a)}] Let
\[
\xi=(m^t,1^s),
\]
and write
\[
s-2N=qm+\rho,\qquad q\in\mathbb Z,\quad 0\le \rho<m.
\]
Then we have
\[
V_{tm+s-2N}^{\xi\to m}(1)
=
[x^N]\frac{p_{m-\rho-1}(x)}{p_m(x)^{q+1}}.
\]
If $q<0$, then
\[
V_{tm+s-2N}^{\xi\to m}(1)=0.
\]
If $q\ge 0$, then $V_{tm+s-2N}^{\xi\to m}(1)$ is the number of tuples
\[
(P_0,P_1,\dots,P_q)
\]
for which
\[
P_0\in \mathcal D_m(0,m-\rho-1;u_0),
\qquad
P_\nu\in \mathcal D_m(0,0;u_\nu)\ \ (1\le \nu\le q),
\]
and
\[u_0+u_1+\cdots+u_q=N.\]

\item[\textup{(b)}] Let
\[
\xi=(m^t,r,1^s),\qquad 1\le r\le m-1,
\]
and write
\[
r+s-2N=qm+\rho,\qquad q\in\mathbb Z,\quad 0\le \rho<m.
\]
Then we have
\[
V_{tm+r+s-2N}^{\xi\to m}(1)
=
[x^N]\frac{p_r(x)p_{m-\rho-1}(x)}{p_m(x)^{q+1}}.
\]
If $q<0$, then
\[
V_{tm+r+s-2N}^{\xi\to m}(1)=0.
\]
If $q=0$ and $2N\le s$, then
\[
V_{tm+r+s-2N}^{\xi\to m}(1)=D_m(r,m-r-s+2N-1;N).
\]
If $q\ge 1$, then $V_{tm+r+s-2N}^{\xi\to m}(1)$ is the number of tuples
\[
(P_0,P_1,\dots,P_q)
\]
for which
\[
P_0\in \mathcal D_m(0,m-\rho-1;u_0),
\qquad
P_1\in \mathcal D_m(r,0;u_1),
\]
\[
P_\nu\in \mathcal D_m(0,0;u_\nu)\ \ (2\le \nu\le q),
\]
and
\[u_0+u_1+\cdots+u_q=N.\]
When $q=0$ and $2N>s$, no unsigned Dyck-path model is asserted here; the displayed coefficient
formula remains valid.

\item[\textup{(c)}] Let
\[
\xi=(m^t,r_1,\dots,r_d,1^s),\qquad 1\le r_i\le m-1,
\]
and write
\[
r_1+\cdots+r_d+s-2N=qm+\rho,\qquad q\in\mathbb Z,\quad 0\le \rho<m.
\]
Then
\[
V_{tm+r_1+\cdots+r_d+s-2N}^{\xi\to m}(1)
=
[x^N]\frac{p_{m-\rho-1}(x)\prod_{i=1}^d p_{r_i}(x)}{p_m(x)^{q+1}}.
\]
If $q<0$, then
\[
V_{tm+r_1+\cdots+r_d+s-2N}^{\xi\to m}(1)=0.
\]
If $q\ge d$, then $V_{tm+r_1+\cdots+r_d+s-2N}^{\xi\to m}(1)$ is the number of tuples
\[
(P_0,P_1,\dots,P_q)
\]
for which
\[
P_0\in \mathcal D_m(0,m-\rho-1;u_0),
\qquad
P_i\in \mathcal D_m(r_i,0;u_i)\ \ (1\le i\le d),
\]
\[
P_\nu\in \mathcal D_m(0,0;u_\nu)\ \ (d+1\le \nu\le q),
\]
and
\[u_0+u_1+\cdots+u_q=N.\]
If $0\le q<d$, no unsigned Dyck-path model is asserted here; the displayed coefficient formula
remains valid.
\end{enumerate}
\end{corollary}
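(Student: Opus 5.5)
The plan is to deduce each part of the corollary from Proposition~\ref{prop:cheb-quotient} with $\mu=n=|\xi|-2N$ and $(|\xi|-\mu)/2=N$, and then to read off the Dyck-path models from Theorem~\ref{thm:manifest}.

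\emph{Step 1: reduce the quotient.} Write $n=|\xi|-2N=\mu_1 m+\mu_0$ with $0\le\mu_0<m$. In all three families $|\xi|=tm+R$, where $R=s$, $R=r+s$ or $R=r_1+\cdots+r_d+s$ respectively. Write $R-2N=qm+\rho$ with $q\in\mathbb Z$ and $0\le\rho<m$. Then $n=(t+q)m+\rho$, so uniqueness of division with remainder gives $\mu_0=\rho$ and $\mu_1=t+q$; this requires $n\ge0$, i.e.\ $t+q\ge0$. Since $p_1=1$ and $p_m^t$ cancels against the $t$ parts equal to $m$, Proposition~\ref{prop:cheb-quotient} gives
\[
V_n^{\xi\to m}(1)=[x^N]\frac{p_{m-\rho-1}(x)\prod_{i}p_{r_i}(x)}{p_m(x)^{q+1}},
\]
with the product over the middle parts (empty in (a), a single $p_r$ in (b)). These are exactly the displayed formulas.

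\emph{Step 2: the case $q<0$.} Here the "quotient" $p_{m-\rho-1}\prod p_{r_i}\cdot p_m^{-q-1}$ is a polynomial. I would bound its degree using $\deg p_a=\lfloor a/2\rfloor$, which follows from the recurrence by induction. The degree is at most
\[
\tfrac12\Big(m-\rho-1+\sum r_i+(-q-1)m\Big)=\tfrac12\Big(\sum r_i-\rho-1-qm\Big)=\tfrac12\big(2N-s-1\big)<N.
\]
The last equality uses $\sum r_i+s-2N=qm+\rho$, so $\sum r_i-\rho-qm=2N-s$. Hence the coefficient of $x^N$ vanishes. In family (a) the same computation reads $\deg\le(2N-s-1)/2<N$.

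\emph{Step 3: Dyck-path models.} Apply Theorem~\ref{thm:manifest} to this reduced quotient, viewing it as $F_{\xi',m,\mu'}$ with $k=q+1$. Equivalently, apply the factorization criterion of that theorem directly, since its proof only uses the factorization of the numerator. Take the pairs:
\begin{itemize}[label={}]
\item $(a_0,b_0)=(0,m-\rho-1)$, which is admissible since $b_0\le m-1$ and $a_0+b_0\le m-1$;
\item $(a_i,b_i)=(r_i,0)$ for $1\le i\le d$;
\item $(0,0)$ for the remaining $q-d$ indices.
\end{itemize}
This uses exactly $q+1$ pairs precisely when $q\ge d$, which is the stated hypothesis; for (b) this means $q\ge1$, and for (a) it means $q\ge0$. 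The numerator identity holds because $p_0=1$, so the product $p_0p_{m-\rho-1}\prod p_{r_i}p_0$ matches. The count with $u_0+\cdots+u_q=N$ then follows.

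For (b) with $q=0$ there is one pair, and the numerator $p_r p_{m-\rho-1}$ must be written as $p_a p_b$ with $a+b\le m-1$. Take $(a,b)=(r,m-\rho-1)$. Then $r+m-\rho-1\le m-1$ iff $r\le\rho$, and $\rho=r+s-2N\ge r$ iff $2N\le s$. This gives $D_m(r,m-\rho-1;N)=D_m(r,m-r-s+2N-1;N)$.

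\emph{Main obstacle.} The only delicate points are the bookkeeping of $\mu_1$ versus $q$ when $q$ is negative, and the degree bound in Step~2. I would double-check the degree bound against small cases, for instance $p_2=1-x$ and $p_3=1-2x$.
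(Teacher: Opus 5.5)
Your proposal is correct and follows the paper's proof essentially step for step. In both, you apply Proposition~\ref{prop:cheb-quotient} with $\mu=|\xi|-2N$ and identify $\mu_0=\rho$, $\mu_1=t+q$. You dispose of $q<0$ with the degree bound $(2N-s-1)/2<N$, which is the paper's computation written without the substitution $q=-h-1$. The Dyck-path models then come from the factorization criterion of Theorem~\ref{thm:manifest}, using the pairs $(0,m-\rho-1)$, $(r_i,0)$ and $(0,0)$, together with the single pair $(r,m-\rho-1)$ when $q=0$ and $2N\le s$ in part~(b).

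One small simplification: the reduced quotient is literally $F_{\xi,m,n}$ with $k=\mu_1+1-t=q+1$, so Theorem~\ref{thm:manifest} applies directly and you do not need an auxiliary $\xi'$.
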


Theorem~\ref{thm:manifest} is deliberately a quotient statement. The point is that the same
quotient can arise from different multiplicity problems once the target weight varies.
Corollary~\ref{cor:manifest-mult} records the corresponding direct formulas for numerical
Demazure multiplicities in the three families treated here.

The remainder of the paper is organized as follows. Section~2 proves the eventual-positivity
theorem by analyzing the roots of $p_m$. Section~3 interprets the basic generating-series factors
in terms of matchings and bounded strip walks, and Section~4 combines these interpretations to
prove the signed coefficient formula. Section~5 gives the unsigned bounded-Dyck-path quotient
families and then translates them back to direct multiplicity statements. Finally, in Section~\ref{sec:AxiomProver} we discuss how AxiomProver, an AI tool for mathematics research, autonomously produced and formalized the theorems in this paper.

\section{The root-theoretic proof of eventual positivity}

In this section we prove Theorem~\ref{thm:main}. The argument separates the Chebyshev quotient
into a cancellable polynomial part and a genuinely rational part, and then uses the smallest
positive root of $p_m$ to control the eventual sign of the coefficients. This is the only place
where we use the precise location of the roots of the Chebyshev-type polynomials.

We first record the structure of the roots of $p_m$ for $m\ge 2$.

\begin{lemma}\label{lem:roots}
Assume $m\ge 2$. The roots of $p_m(x)$ are
\[
\rho_j=\frac{1}{4\cos^2\!\left(\frac{j\pi}{m+1}\right)},
\qquad 1\le j\le \left\lfloor \frac m2\right\rfloor.
\]
These roots are simple, positive, and satisfy
\[
0<\rho_1<\rho_2<\cdots<\rho_{\lfloor m/2\rfloor}.
\]
In particular, $\rho_1$ is the unique root of $p_m$ having smallest modulus.
\end{lemma}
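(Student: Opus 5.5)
The plan is to relate $p_r$ to the Chebyshev polynomials of the second kind $U_r$ through a closed form, and then read off the roots. Set $x=\frac{1}{4\cos^2\theta}$ for $\theta\in(0,\pi/2)$, and consider the candidate
\[
p_r(x)=\frac{\sin((r+1)\theta)}{(2\cos\theta)^{r}\sin\theta}.
\]
First check the initial values. For $r=0$ the right side is $1$. For $r=1$ it is $\sin 2\theta/(2\cos\theta\sin\theta)=1$.

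Next check the recurrence $p_{r+1}=p_r-xp_{r-1}$. Multiply through by $(2\cos\theta)^{r+1}\sin\theta$. Since $x(2\cos\theta)^2=1$, the recurrence becomes
\[
\sin((r+2)\theta)=2\cos\theta\,\sin((r+1)\theta)-\sin(r\theta),
\]
which is the standard sine addition identity. Hence the closed form holds for all $r$, as an identity of functions of $\theta$. Equivalently, one can write $p_r(x)=x^{r/2}U_r\!\left(\tfrac{1}{2\sqrt x}\right)$.

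With the closed form in hand, the zeros follow from where $\sin((m+1)\theta)$ vanishes. For $\theta\in(0,\pi/2)$ this happens exactly at $\theta_j=j\pi/(m+1)$ with $1\le j\le\lfloor m/2\rfloor$; here $j\pi/(m+1)<\pi/2$ is equivalent to $2j<m+1$. These give distinct values
\[
\rho_j=\frac{1}{4\cos^2\theta_j}.
\]
They are positive, and they are strictly increasing in $j$ because $\cos^2$ decreases on $(0,\pi/2)$.

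To show these are all the roots and that each is simple, compare with the degree of $p_m$. Induction on the recurrence gives $\deg p_m\le\lfloor m/2\rfloor$. More precisely, $p_r(x)=\sum_j(-1)^j\binom{r-j}{j}x^j$, so the coefficient of $x^{\lfloor m/2\rfloor}$ is $\pm\binom{m-\lfloor m/2\rfloor}{\lfloor m/2\rfloor}\ne0$. Therefore $\deg p_m=\lfloor m/2\rfloor$ exactly. Since $p_m$ already has $\lfloor m/2\rfloor$ distinct roots $\rho_j$, these are all of its roots, and each is simple.

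Finally, since every root is positive real and $\rho_1$ is the strictly smallest, $\rho_1$ is the unique root of smallest modulus. The main obstacle is only bookkeeping. One must make sure the parametrization covers all the relevant $x$, and that the count of roots matches the degree. I would handle this by the degree argument above, which avoids needing to parametrize $x\le1/4$ or complex $x$ at all.
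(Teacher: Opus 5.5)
Your proof is correct and takes essentially the same route as the paper: the identity $p_m(x)=x^{m/2}U_m\bigl((2\sqrt{x})^{-1}\bigr)$ locates the $\rho_j$, and the degree count $\deg p_m=\lfloor m/2\rfloor$ gives simplicity. The differences are minor. You verify the closed form directly from the recurrence and let the degree count show that the $\rho_j$ are all the roots, whereas the paper gets this from the full zero set of $U_m$ (pairing $j$ with $m+1-j$ and discarding $j=(m+1)/2$) and uses the degree count only for simplicity; your check that the top coefficient $\binom{m-\lfloor m/2\rfloor}{\lfloor m/2\rfloor}$ is nonzero also fills in a step the paper leaves implicit.
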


\begin{proof}
Set $x=z^2$.  The recurrence for $p_m$ is equivalent to the standard identity
\[
p_m(z^2)=z^m U_m\!\left(\frac{1}{2z}\right),
\]
where $U_m$ is the Chebyshev polynomial of the second kind.  The zeros of $U_m$ are
\(\cos(j\pi/(m+1))\), $1\le j\le m$, and
\[
U_m(\cos\theta)=\frac{\sin((m+1)\theta)}{\sin\theta}
\]
\cite[Ch.~2]{MasonHandscomb2002}.  Since $p_m(0)=1$, every root of $p_m$ is nonzero.  Thus a
zero of $p_m(z^2)$ satisfies
\[
\frac{1}{2z}=\cos\left(\frac{j\pi}{m+1}\right)
\]
for some $j$.  If $m$ is odd, the value $j=(m+1)/2$ gives the zero $0$ of $U_m$ and hence no
finite value of $z$.  The remaining indices occur in pairs $j$ and $m+1-j$, and these two values
of $z$ have opposite signs and hence the same value of $x=z^2$.  Therefore the distinct roots of
$p_m(x)$ are precisely
\[
\rho_j=\frac{1}{4\cos^2(j\pi/(m+1))},\qquad 1\le j\le \lfloor m/2\rfloor.
\]
The recurrence also gives $\deg p_m=\lfloor m/2\rfloor$, so the displayed distinct roots account
for all roots and are simple.

Finally,
\[
0<\frac{\pi}{m+1}<\frac{2\pi}{m+1}<\cdots\le
\frac{\lfloor m/2\rfloor\pi}{m+1}<\frac{\pi}{2},
\]
and $\cos\theta$ is strictly decreasing on $(0,\pi/2)$.  Hence the sequence
$\cos(j\pi/(m+1))$ is strictly decreasing and the sequence $\rho_j$ is strictly increasing.  Since
all roots are positive, $\rho_1$ is the unique root of smallest modulus.
\end{proof}

\begin{lemma}\label{lem:positive-at-rho1}
Assume $m\ge 2$. Let $\theta=\pi/(m+1)$ and $\rho_1=1/(4\cos^2\theta)$. If
$0\le r<m$, then
\[
p_r(\rho_1)>0.
\]
\end{lemma}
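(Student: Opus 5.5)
We use the same substitution as in the proof of Lemma~\ref{lem:roots}, namely $x=z^2$ with $z>0$, together with the identity
\[
p_r(z^2)=z^r U_r\!\left(\frac{1}{2z}\right),
\]
which holds for every $r\ge 0$. This identity can be checked directly. Both sides equal $1$ for $r=0$ and $r=1$, since $zU_1(1/(2z))=z\cdot(1/z)=1$. Moreover, the three-term recurrence $U_{r+1}(y)=2yU_r(y)-U_{r-1}(y)$ with $y=1/(2z)$ becomes
\[
z^{r+1}U_{r+1}=z^rU_r-z^2\,z^{r-1}U_{r-1},
\]
which is exactly $p_{r+1}=p_r-xp_{r-1}$.

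Now take $z=1/(2\cos\theta)$ with $\theta=\pi/(m+1)$. Then $z^2=\rho_1$ and $1/(2z)=\cos\theta$. Since $0<\theta<\pi/2$, we have $z>0$. The trigonometric formula for $U_r$ then gives
\[
p_r(\rho_1)=\frac{1}{(2\cos\theta)^r}\cdot\frac{\sin((r+1)\theta)}{\sin\theta}.
\]

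It remains to check the sign. The prefactor $(2\cos\theta)^{-r}$ is positive, and $\sin\theta>0$. For $0\le r<m$ we have $1\le r+1\le m$, so
\[
0<(r+1)\theta\le \frac{m\pi}{m+1}<\pi,
\]
and therefore $\sin((r+1)\theta)>0$. Hence $p_r(\rho_1)>0$.

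No step here is a real obstacle. The only point needing care is to verify the identity for all $r\ge 0$, including the base cases $r=0,1$. This covers the case $r=0$, where $p_0=1$ trivially. It also shows the bound $r<m$ is sharp: at $r=m$ we get $\sin(\pi)=0$, which recovers that $\rho_1$ is a root of $p_m$.
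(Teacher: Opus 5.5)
Your proof is correct and follows the paper's own argument: the paper likewise writes $p_r(\rho_1)=\rho_1^{r/2}U_r(\cos\theta)=\rho_1^{r/2}\sin((r+1)\theta)/\sin\theta$ and checks that $0<(r+1)\theta\le m\pi/(m+1)<\pi$. The only difference is that you derive the identity $p_r(z^2)=z^rU_r(1/(2z))$ from the two recurrences, where the paper cites it as standard.
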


\begin{proof}
Using the Chebyshev formula, with the positive square root,
\[
p_r(x)=x^{r/2}U_r\!\left((2\sqrt{x})^{-1}\right),
\]
so
\[
p_r(\rho_1)=\rho_1^{r/2}U_r(\cos\theta)
=\rho_1^{r/2}\frac{\sin((r+1)\theta)}{\sin\theta}.
\]
Since $0<\theta<\pi$ and $0<(r+1)\theta\le m\theta=m\pi/(m+1)<\pi$ for $r<m$,
both denominator and numerator are positive. Hence $p_r(\rho_1)>0$.
\end{proof}

\begin{proof}[Proof of Theorem~\ref{thm:main}]
If $m=1$, then $\mu_0=0$, $p_0(x)=p_1(x)=1$, and every part of $\xi$ is equal to $1$.
Hence $p_\xi(x)=1$ and therefore $F(x)=1$. This proves part~(1).

Now assume $m\ge 2$. Write
\[
s:=\mu_1+1.
\]
Since $\xi_i\le m$ for all $i$, we may factor
\[
p_\xi(x)=p_m(x)^t H(x),
\]
where
\[
H(x)=\prod_{\xi_i<m} p_{\xi_i}(x)
\]
is a polynomial all of whose factors have index $<m$. Hence
\[
F(x)=\frac{p_{m-\mu_0-1}(x)H(x)}{p_m(x)^{s-t}}.
\]
Set
\[
G(x):=p_{m-\mu_0-1}(x)H(x).
\]
Every factor of $G$ is of the form $p_r$ with $0\le r<m$, because
$m-\mu_0-1\in\{0,1,\dots,m-1\}$. By Lemma~\ref{lem:positive-at-rho1}, each such factor
is positive at $\rho_1$, so
\[
G(\rho_1)>0.
\]

If $t\ge s$, then
\[
F(x)=p_m(x)^{t-s}G(x)
\]
is a polynomial, proving part~(2a).

Now assume $t\le \mu_1$, so that
\[
k:=s-t\ge 1.
\]
Then we have
\[
F(x)=\frac{G(x)}{p_m(x)^k}.
\]
By Lemma~\ref{lem:roots}, with $J:=\lfloor m/2\rfloor$, we have
\[
p_m(x)=\prod_{j=1}^J \left(1-\frac{x}{\rho_j}\right),
\]
so
\[
F(x)=\frac{G(x)}{\prod_{j=1}^J \left(1-\frac{x}{\rho_j}\right)^k}.
\]
Because $G(\rho_1)>0$, the function $F$ has a pole of order exactly $k$ at $x=\rho_1$.

If $J=1$, then $p_m(x)=1-x/\rho_1$, and so
\[
F(x)=\frac{G(x)}{\left(1-\frac{x}{\rho_1}\right)^k}.
\]
Write the Taylor expansion of $G$ at $x=\rho_1$ as
\[
G(x)=\sum_{j\ge 0} c_j\left(1-\frac{x}{\rho_1}\right)^j,
\]
where $c_0=G(\rho_1)>0$. Then
\[
F(x)=\sum_{j\ge 0} c_j\left(1-\frac{x}{\rho_1}\right)^{j-k}.
\]
Splitting off the terms $0\le j\le k-1$, we obtain
\[
F(x)=P(x)+\sum_{\ell=1}^k \frac{A_\ell}{\left(1-\frac{x}{\rho_1}\right)^\ell},
\]
where $P(x)$ is a polynomial and
\[
A_k=c_0=G(\rho_1)>0.
\]
The coefficient of $x^r$ in $(1-x/\rho_1)^{-\ell}$ is
\[
\binom{r+\ell-1}{\ell-1}\rho_1^{-r}.
\]
Hence the contribution from the pole at $\rho_1$ is
\[
M_r:=\sum_{\ell=1}^k A_\ell\binom{r+\ell-1}{\ell-1}\rho_1^{-r}
=Q(r)\rho_1^{-r},
\]
where $Q(r)$ is a polynomial of degree $k-1$ with leading coefficient
$A_k/(k-1)!>0$. Therefore $M_r>0$ for all sufficiently large $r$. Since the polynomial
part contributes only finitely many nonzero coefficients, we conclude that
$a_r>0$ for all sufficiently large $r$.

It remains to treat the case $J\ge 2$. Define
\[
\Phi(x):=\frac{G(x)}{\prod_{j=2}^J\left(1-\frac{x}{\rho_j}\right)^k}.
\]
Then $\Phi$ is analytic at $x=\rho_1$ and
\[
\Phi(\rho_1)=\frac{G(\rho_1)}{\prod_{j=2}^J\left(1-\frac{\rho_1}{\rho_j}\right)^k}>0,
\]
because $0<\rho_1<\rho_j$ for all $j\ge 2$. Thus, near $x=\rho_1$, we have
\[
F(x)=\frac{\Phi(x)}{\left(1-\frac{x}{\rho_1}\right)^k}.
\]
Therefore the partial fraction decomposition of $F$ contains a term
\[
\sum_{\ell=1}^k \frac{A_\ell}{\left(1-\frac{x}{\rho_1}\right)^\ell}
\]
with
\[
A_k=\lim_{x\to \rho_1}\left(1-\frac{x}{\rho_1}\right)^kF(x)=\Phi(\rho_1)>0.
\]
Hence, we may write
\[
F(x)=P(x)+\sum_{\ell=1}^k \frac{A_\ell}{\left(1-\frac{x}{\rho_1}\right)^\ell}
+\sum_{j=2}^J\sum_{\ell=1}^k \frac{B_{j,\ell}}{\left(1-\frac{x}{\rho_j}\right)^\ell},
\]
where $P(x)$ is a polynomial and the $B_{j,\ell}$ are constants.

The coefficient of $x^r$ in $(1-x/\rho)^{-\ell}$ is
\[
\binom{r+\ell-1}{\ell-1}\rho^{-r}.
\]
Hence the contribution from the pole at $\rho_1$ is again
\[
M_r:=\sum_{\ell=1}^k A_\ell\binom{r+\ell-1}{\ell-1}\rho_1^{-r}
=Q(r)\rho_1^{-r},
\]
where $Q(r)$ is a polynomial of degree $k-1$ whose leading coefficient is
$A_k/(k-1)!>0$. In particular, there exist constants $c>0$ and $R_1$ such that
\[
M_r\ge c\, r^{k-1}\rho_1^{-r}\qquad (r\ge R_1).
\]

Let $E_r$ denote the contribution from the polynomial part $P(x)$ and the poles
$\rho_j$ with $j\ge 2$. Since the polynomial part contributes only finitely many
nonzero coefficients, and since there are only finitely many pairs $(j,\ell)$ with
$j\ge 2$, there exist constants $C>0$ and $R_2$ such that
\[
|E_r|\le C\, r^{k-1}\rho_2^{-r}\qquad (r\ge R_2).
\]
Because $0<\rho_1<\rho_2$, we have $(\rho_1/\rho_2)^r\to 0$. Therefore,
for all sufficiently large $r$,
\[
|E_r|\le \frac12 M_r.
\]
For such $r$, we have
\[
a_r=M_r+E_r\ge M_r-|E_r|\ge \frac12 M_r>0.
\]
This proves part~(2b).
\end{proof}

\section{Walk and matching interpretations of the basic factors}

In this section we prove two standard facts: first, $1/p_m(x)$ and related quotients admit a
transfer-matrix interpretation in terms of strip walks; second, $p_r(x)$ is the matching
polynomial of a path graph after a simple change of variables. Both viewpoints are standard: see
Flajolet~\cite{Flajolet1980} for transfer-matrix/continued-fraction methods and Godsil~\cite{Godsil1993}
for matching polynomials.  

\begin{remark} The transfer-matrix interpretation underlying Corollary~\ref{cor:walkquotient} is classical and extends more generally to orthogonal polynomials and continued fractions; see, for example, Theorem~10.11.1 of Krattenthaler~\cite{KrattenthalerHEC}. Related determinant-based proofs appear already in Chapter~V of Viennot's monograph on orthogonal polynomials~\cite{Viennot}, and combinatorial proofs via heaps are discussed in Cigler and Krattenthaler~\cite{CK}. 
\end{remark}

\smallskip
Our purpose here is to specialize these methods to the Chebyshev quotients arising from Demazure multiplicities and to connect them with the signed and unsigned combinatorial models developed in later sections. More generally, quotient formulas of this type arise naturally in the
theory of orthogonal polynomials and bounded Motzkin path enumeration.
In Viennot's framework~\cite{Viennot}, the generating function for bounded
Motzkin paths can be expressed in terms of quotients of reciprocal
orthogonal polynomials. See also Stanton and Kim~\cite{KimStanton}
for a modern formulation of these results.
The Chebyshev case considered here corresponds to the specialization
$b_n=0$ and $\lambda_n=1$ (up to normalization), in which Motzkin paths
reduce to Dyck paths.

Let $A_m$ be the adjacency matrix of the path graph on vertices $0,1,\dots,m-1$:
\[
A_m=
\begin{pmatrix}
0&1\\
1&0&1\\
&1&0&1\\
&&\ddots&\ddots&\ddots\\
&&&1&0
\end{pmatrix}.
\]
For an indeterminate $s$, define
\[
K_m(s):=I-sA_m.
\]

We also set
\[
\Delta_m(s):=\det K_m(s),
\]
with the convention $\Delta_0(s)=1$.

\begin{lemma}\label{lem:continuant}
For every $m\ge 0$ one has
\[
\Delta_m(s)=p_m(s^2).
\]
\end{lemma}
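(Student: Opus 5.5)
The plan is to expand $\det K_m(s)$ along its last row, derive a three-term recurrence for $\Delta_m(s)$, and match it with the defining recurrence of $p_m$ after the substitution $x=s^2$.

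\emph{Step 1: base cases.} Check the first few values directly.
\begin{itemize}
\item $\Delta_0(s)=1$ by convention.
\item $\Delta_1(s)=\det(1)=1$, since $A_1$ is the $1\times 1$ zero matrix.
\item $\Delta_2(s)=\det\begin{pmatrix}1&-s\\-s&1\end{pmatrix}=1-s^2$.
\end{itemize}
These agree with $p_0(s^2)=p_1(s^2)=1$ and $p_2(s^2)=p_1(s^2)-s^2p_0(s^2)=1-s^2$.

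\emph{Step 2: the recurrence.} Fix $m\ge 2$. The matrix $K_m(s)$ is tridiagonal, with diagonal entries $1$ and off-diagonal entries $-s$. Expand $\det K_m(s)$ along its last row.
\begin{itemize}
\item The diagonal entry $1$ has cofactor $\Delta_{m-1}(s)$, since deleting the last row and column leaves $K_{m-1}(s)$.
\item The entry $-s$ in position $(m,m-1)$ carries the sign $(-1)^{2m-1}=-1$. Its minor is the determinant of a matrix whose last column contains only the entry $-s$ in position $(m-1,m-1)$. Expanding that minor along its last column gives $-s\,\Delta_{m-2}(s)$.
\end{itemize}
Combining the two contributions,
\[
\Delta_m(s)=\Delta_{m-1}(s)-(-s)(-s)\Delta_{m-2}(s)=\Delta_{m-1}(s)-s^2\Delta_{m-2}(s).
\]
This is the standard continuant recurrence.

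\emph{Step 3: induction.} The recurrence in Step 2 is exactly the defining recurrence $p_{r+1}(x)=p_r(x)-x\,p_{r-1}(x)$ with $x=s^2$. The initial values agree at $m=0$ and $m=1$, so strong induction on $m$ gives $\Delta_m(s)=p_m(s^2)$ for all $m\ge 0$.

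The only point requiring care is the sign bookkeeping in the cofactor expansion, specifically that the two factors of $-s$ combine to give $-s^2$. This can be cross-checked against the computed value $\Delta_2(s)=1-s^2$. For the case $m=2$, the recurrence uses $\Delta_0=1$, so the convention is consistent with the expansion.
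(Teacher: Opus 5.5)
Your proposal is correct and follows the paper's proof essentially verbatim: you expand $\det K_m(s)$ along the last row to obtain the continuant recurrence $\Delta_m(s)=\Delta_{m-1}(s)-s^2\Delta_{m-2}(s)$, and then match it by induction with the defining recurrence of $p_m$ at $x=s^2$, using the initial values $\Delta_0=\Delta_1=1$. Your sign bookkeeping in the cofactor expansion is right, and the explicit check $\Delta_2(s)=1-s^2$ is a harmless extra.
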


\begin{proof}
The determinant $\Delta_m(s)$ satisfies the same recurrence as $p_m(s^2)$. Indeed, expanding
$\Delta_{m+1}(s)$ along the last row (or last column) gives
\[
\Delta_{m+1}(s)=\Delta_m(s)-s^2\Delta_{m-1}(s)\qquad (m\ge 1),
\]
with initial values
\[
\Delta_0(s)=1,\qquad \Delta_1(s)=1.
\]
Since $p_0(s^2)=p_1(s^2)=1$ and
\[
p_{m+1}(s^2)=p_m(s^2)-s^2p_{m-1}(s^2),
\]
it follows by induction on $m$ that $\Delta_m(s)=p_m(s^2)$ for all $m\ge 0$.
\end{proof}

\begin{proposition}\label{prop:inverse-entry}
Let $m\ge 1$ and $0\le a\le b\le m-1$. Then
\[
\bigl(K_m(s)^{-1}\bigr)_{a,b}
=s^{b-a}\frac{p_a(s^2)p_{m-1-b}(s^2)}{p_m(s^2)}.
\]
By symmetry the same formula holds for $a\ge b$ after interchanging $a$ and $b$.
\end{proposition}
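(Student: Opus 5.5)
We will compute the $(a,b)$ entry of $K_m(s)^{-1}$ by the adjugate (cofactor) formula,
\[
\bigl(K_m(s)^{-1}\bigr)_{a,b}=\frac{(-1)^{a+b}\det K_m(s)^{(b,a)}}{\det K_m(s)},
\]
where $K_m(s)^{(b,a)}$ denotes the matrix obtained by deleting row $b$ and column $a$. The denominator is $p_m(s^2)$ by Lemma~\ref{lem:continuant}. Since $p_m(0)=1$, $\Delta_m(s)$ is a nonzero element of $\mathbb Z[s]$, so the inverse exists over $\mathbb Q(s)$ (or in $\mathbb Z[[s]]$). The whole task therefore reduces to evaluating one minor of a tridiagonal matrix.

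The plan for the minor is as follows. Index rows and columns by $0,\dots,m-1$, and assume $a\le b$; since $K_m(s)$ is symmetric, the case $a\ge b$ follows by transposing. Delete row $b$ and column $a$. The remaining matrix is block lower triangular with three diagonal blocks:
\begin{itemize}
\item the top-left block, on rows and columns $0,\dots,a-1$, which is $K_a(s)$;
\item a middle block, on rows $a,\dots,b-1$ and columns $a+1,\dots,b$, which is lower triangular with diagonal entries $(K_m)_{i,i+1}=-s$;
\item the bottom-right block, on rows and columns $b+1,\dots,m-1$, which is $K_{m-1-b}(s)$.
\end{itemize}

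The step needing care is verifying that the entries coupling these blocks vanish above the block diagonal. Consider row $a-1$ of the top block: its only possible nonzero entry outside columns $0,\dots,a-1$ would sit in column $a$, and column $a$ has been deleted. Similarly, the rows of the middle block have no entries in columns $\ge b+1$ except possibly row $b$, which has been deleted. With these checks the determinant factors as
\[
\Delta_a(s)\,(-s)^{b-a}\,\Delta_{m-1-b}(s).
\]

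It remains to handle the sign. We have $(-1)^{a+b}(-s)^{b-a}=(-1)^{2b}s^{b-a}=s^{b-a}$, since $(-1)^{a+b}=(-1)^{b-a}$. Applying Lemma~\ref{lem:continuant} to the factors $\Delta_a(s)$ and $\Delta_{m-1-b}(s)$ then gives exactly the claimed formula.

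The degenerate cases $a=0$, $b=m-1$, and $a=b$ are covered by the same argument once we use the convention $\Delta_0(s)=1$ for the empty blocks.

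The main obstacle is the bookkeeping of the block structure and the cofactor sign. If the block verification becomes messy, an alternative is to check directly that the proposed matrix $X$ satisfies $K_m(s)X=I$ column by column. Multiplying row $i$ of $K_m(s)$ against the column gives
\[
X_{i,b}-sX_{i-1,b}-sX_{i+1,b}.
\]
For $i<b$ this vanishes by the recurrence for $p_i$, and for $i>b$ it vanishes by the recurrence for $p_{m-1-i}$. At $i=b$ the expression equals
\[
\bigl[p_b p_{m-1-b}-s^2p_{b-1}p_{m-1-b}-s^2p_bp_{m-2-b}\bigr]/p_m,
\]
and this equals $1$ by the standard splitting identity $p_m=p_{b+1}p_{m-1-b}-s^2p_bp_{m-2-b}$. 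That identity itself follows from the recurrence by induction.
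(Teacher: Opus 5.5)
Your proposal is correct and is essentially the paper's proof: the cofactor (Cramer) formula, the block-triangular factorization of the minor with row $b$ and column $a$ deleted into $\Delta_a(s)\,(-s)^{b-a}\,\Delta_{m-1-b}(s)$, the sign cancellation $(-1)^{a+b}(-1)^{b-a}=1$, and then Lemma~\ref{lem:continuant}. Your explicit check of the vanishing off-diagonal blocks fills in detail the paper leaves implicit; it shows that in the natural ordering the minor is block \emph{lower} triangular, which does not affect the determinant. Your fallback verification of $K_m(s)X=I$ via the splitting identity is also valid, though not needed.
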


\begin{proof}
By Cramer's rule,
\[
\bigl(K_m(s)^{-1}\bigr)_{a,b}=\frac{(-1)^{a+b}\det K_m(s)[b\mid a]}{\det K_m(s)},
\]
where $K_m(s)[b\mid a]$ denotes the matrix obtained by deleting row $b$ and column $a$.
When $a\le b$, the tridiagonal shape of $K_m(s)$ implies that this minor is block upper
triangular, with one $a\times a$ block equal to $K_a(s)$, one $(m-1-b)\times (m-1-b)$ block
 equal to $K_{m-1-b}(s)$, and a chain of $b-a$ off-diagonal $-s$ entries linking them.
Consequently,
\[
\det K_m(s)[b\mid a]=(-1)^{a+b}s^{b-a}\Delta_a(s)\Delta_{m-1-b}(s).
\]
Using Lemma~\ref{lem:continuant}, we obtain
\[
\bigl(K_m(s)^{-1}\bigr)_{a,b}
=s^{b-a}\frac{p_a(s^2)p_{m-1-b}(s^2)}{p_m(s^2)},
\]
as claimed.
\end{proof}

\begin{corollary}\label{cor:walkquotient}
Let $m\ge 1$ and $0\le a\le b\le m-1$. Then
\[
\frac{p_a(x)p_{m-1-b}(x)}{p_m(x)}
=\sum_{r\ge 0} w_{b-a+2r}^{(m)}(a,b)\,x^r.
\]
In particular, we have
\[
\frac{1}{p_m(x)}=\sum_{u\ge 0} B_m(u)x^u
\]
and
\[
\frac{p_{m-c-1}(x)}{p_m(x)}=\sum_{u\ge 0} w_{c+2u}^{(m)}(0,c)x^u\qquad (0\le c<m).
\]
\end{corollary}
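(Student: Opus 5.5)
The plan is to read Corollary~\ref{cor:walkquotient} off Proposition~\ref{prop:inverse-entry} by expanding $K_m(s)^{-1}$ as a Neumann series in $s$. We then compare coefficients after the substitution $x=s^2$. As formal power series in $s$, the constant term of $K_m(s)=I-sA_m$ is the identity. Hence
\[
K_m(s)^{-1}=\sum_{L\ge 0} s^L A_m^L .
\]
By the standard transfer-matrix fact, $(A_m^L)_{a,b}$ counts sequences $(h_0,\dots,h_L)$ with $h_0=a$, $h_L=b$, steps $\pm1$, and all $h_i\in\{0,\dots,m-1\}$. Indexing the path graph by $0,\dots,m-1$, this count is exactly $w^{(m)}_L(a,b)$. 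We therefore get
\[
\bigl(K_m(s)^{-1}\bigr)_{a,b}=\sum_{L\ge 0} w_L^{(m)}(a,b)\,s^L .
\]

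Next we note the parity constraint. Every $\pm1$ step changes parity, so $w_L^{(m)}(a,b)=0$ unless $L\equiv b-a\pmod 2$. Also $w_L^{(m)}(a,b)=0$ unless $L\ge b-a$. Writing $L=b-a+2r$ with $r\ge 0$, the series becomes $s^{b-a}\sum_{r\ge0} w^{(m)}_{b-a+2r}(a,b)\,s^{2r}$.

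We compare this with Proposition~\ref{prop:inverse-entry}, which gives $s^{b-a}\,p_a(s^2)p_{m-1-b}(s^2)/p_m(s^2)$. Since $p_m(0)=1$, this quotient is a genuine power series. Cancelling the factor $s^{b-a}$ in the ring of formal power series (an integral domain) yields an identity of power series in $s^2$. Both sides are power series in $s^2$, so we may substitute $x=s^2$, and the main identity follows.

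The special cases are direct substitutions. For $1/p_m(x)$, take $a=0$ and $b=m-1$. Then $p_0=1$ and $p_{m-1-b}=p_0=1$, and the coefficients are $w^{(m)}_{m-1+2u}(0,m-1)=B_m(u)$. For the second display, take $a=0$ and $b=c$. The only care needed is to justify treating the matrix inverse as a formal series, that is, the Neumann expansion over $\mathbb{Z}[[s]]$ being the inverse. This holds because $K_m(s)$ has invertible constant term $I$. No analytic convergence is needed, so there is no serious obstacle.
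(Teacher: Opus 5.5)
Your proposal is correct and follows the same route as the paper. Both arguments use the Neumann expansion of $K_m(s)^{-1}$ over formal power series, the transfer-matrix count of strip walks, the parity reduction $L=b-a+2r$, and comparison with Proposition~\ref{prop:inverse-entry} followed by the substitution $x=s^2$; your remarks about cancelling $s^{b-a}$ in the formal power series ring and about $w_L^{(m)}(a,b)$ vanishing for $L<b-a$ simply spell out steps the paper leaves implicit.
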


\begin{proof}
Since the constant term of $K_m(s)$ is the identity matrix, the inverse exists as a formal
power series and the Neumann-series identity gives
\[
K_m(s)^{-1}=\sum_{L\ge 0}(sA_m)^L.
\]
The $(a,b)$-entry of $A_m^L$ counts walks of length $L$ from $a$ to $b$ on the path graph
with vertex set $\{0,1,\dots,m-1\}$, and those walks are exactly the strip walks defined in
Section~1. Therefore
\[
\bigl(K_m(s)^{-1}\bigr)_{a,b}=\sum_{L\ge 0} w_L^{(m)}(a,b) s^L.
\]
Combining this with Proposition~\ref{prop:inverse-entry} yields
\[
\sum_{L\ge 0} w_L^{(m)}(a,b)s^L
=s^{b-a}\frac{p_a(s^2)p_{m-1-b}(s^2)}{p_m(s^2)}.
\]
Now every strip walk from $a$ to $b$ has length congruent to $b-a\pmod 2$, so writing
$L=b-a+2r$ and substituting $x=s^2$ gives
\[
\frac{p_a(x)p_{m-1-b}(x)}{p_m(x)}
=\sum_{r\ge 0} w_{b-a+2r}^{(m)}(a,b)\,x^r.
\]
Taking $(a,b)=(0,m-1)$ gives the formula for $1/p_m(x)$, while taking
$(a,b)=(0,c)$ gives the second displayed identity.
\end{proof}
\begin{remark}
Corollary~\ref{cor:walkquotient} may be viewed as the Chebyshev specialization of more general bounded Motzkin-path generating functions associated with orthogonal polynomials; see Chapter~V of \cite{Viennot}.

\end{remark}
\begin{lemma}\label{lem:matchingcount}
For every $r\ge 0$, we have 
\[
p_r(x)=\sum_{j=0}^{\lfloor r/2\rfloor}(-1)^j\binom{r-j}{j}x^j.
\]
Equivalently, we have
\[
p_r(x)=\sum_{j\ge 0}(-1)^jm_r(j)x^j,
\]
where $m_r(j)$ is the number of $j$-edge matchings in the path graph $P_r$.
In particular,
\[
m_r(j)=\binom{r-j}{j}.
\]
\end{lemma}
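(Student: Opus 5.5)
We will prove the explicit binomial formula by strong induction on $r$, using the defining recurrence $p_{r+1}=p_r-xp_{r-1}$, and then obtain the matching interpretation from a separate bijective count of matchings in $P_r$.

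\textbf{Base cases.} For $r=0$ and $r=1$ the sum reduces to the single term $j=0$. This gives $\binom{0}{0}=\binom{1}{0}=1$, in agreement with $p_0=p_1=1$.

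\textbf{Inductive step.} Assume the formula holds for $r$ and $r-1$. Then the coefficient of $x^j$ in $p_r(x)-xp_{r-1}(x)$ is
\[
(-1)^j\binom{r-j}{j}-(-1)^{j-1}\binom{r-1-(j-1)}{j-1}
=(-1)^j\left[\binom{r-j}{j}+\binom{r-j}{j-1}\right]
=(-1)^j\binom{r+1-j}{j},
\]
by Pascal's rule. This is the claimed coefficient for $p_{r+1}$. The convention that $\binom{A}{B}=0$ unless $0\le B\le A$ handles the boundary indices automatically: the terms with $j>\lfloor r/2\rfloor$ vanish, and so does the $j=0$ contribution from $xp_{r-1}$. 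The one point needing care is that Pascal's rule $\binom{n}{j}+\binom{n}{j-1}=\binom{n+1}{j}$ stays valid with this convention whenever $n=r-j\ge -1$. For $j$ beyond the range, both sides are $0$, which we can check directly.

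\textbf{Matching count.} To show $m_r(j)=\binom{r-j}{j}$, we use the standard bijection. A $j$-edge matching of $P_r$ is a linear arrangement of $j$ dominoes and $r-2j$ monominoes, with $r-j$ pieces in total. Choosing which $j$ of the $r-j$ positions hold dominoes determines the matching, which gives $\binom{r-j}{j}$.

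As an alternative, one can check that $\sum_j(-1)^jm_r(j)x^j$ satisfies the same recurrence by conditioning on whether vertex $r+1$ of $P_{r+1}$ is unmatched or is matched to vertex $r$. The unmatched case contributes $p_r$, and the matched case contributes $-xp_{r-1}$. Combining either argument with the binomial formula gives the equivalent forms.

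No step is a genuine obstacle. The only mild care needed is the boundary bookkeeping of the binomial convention in the inductive step.
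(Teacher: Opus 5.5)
Your proof is correct and follows the paper's route. You get the closed formula by induction on $p_{r+1}=p_r-xp_{r-1}$, which the paper only calls an ``easy induction'' and you carry out via Pascal's rule, and you get $m_r(j)=\binom{r-j}{j}$ by a bijection. One small slip: Pascal's rule fails under the zero convention at $(n,j)=(-1,0)$, so ``valid whenever $n\ge -1$'' is slightly overstated, but that case needs $r=-1$ and never arises. The only real difference is cosmetic: you encode a matching as an arrangement of $j$ dominoes and $r-2j$ monominoes, while the paper shifts the gap-separated edge indices by $y_\ell=i_\ell-(\ell-1)$ to get an arbitrary $j$-subset of $\{1,\dots,r-j\}$; both are the same standard count.
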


\begin{proof}
The closed formula for $p_r(x)$ follows by an easy induction on $r$ from the recurrence
$p_{r+1}=p_r-xp_{r-1}$. The resulting interpretation as a matching polynomial for a path graph is
standard; see, for example, Godsil~\cite[Ch.~1]{Godsil1993}. For the matching interpretation, note that a $j$-edge matching in
$P_r$ may be specified by choosing indices
\[
1\le i_1< i_2<\cdots < i_j\le r-1
\]
with the additional condition $i_{\ell+1}\ge i_\ell+2$, because adjacent edges cannot both
belong to a matching. Setting
\[
y_\ell:=i_\ell-(\ell-1)
\]
produces a strictly increasing $j$-tuple
\[
1\le y_1<y_2<\cdots<y_j\le r-j.
\]
This construction is reversible, so the number of such matchings is
\[
\binom{r-j}{j}.
\]
Substituting this into the closed formula gives the stated matching-polynomial identity.
\end{proof}

\section{Proof of the combinatorial coefficient formula}

With the interpretations from the previous section in hand, the proof of Theorem~\ref{thm:comb}
is a direct coefficient extraction. We expand the numerator factors using the matching model, expand
each reciprocal factor using bounded strip walks, and then collect the coefficient of a fixed power
of $x$.

\begin{proof}[Proof of Theorem~\ref{thm:comb}]
By construction, we have
\[
F_{\xi,m,\mu}(x)=\frac{\prod_{i=0}^L p_{\alpha_i}(x)}{p_m(x)^k}.
\]
By Lemma~\ref{lem:matchingcount}, each factor $p_{\alpha_i}(x)$ has the expansion
\[
p_{\alpha_i}(x)=\sum_{j_i\ge 0}(-1)^{j_i}\binom{\alpha_i-j_i}{j_i}x^{j_i},
\]
where the binomial coefficient is automatically zero when $j_i>\lfloor \alpha_i/2\rfloor$.
By Corollary~\ref{cor:walkquotient}, we also have
\[
\frac{1}{p_m(x)}=\sum_{u\ge 0} B_m(u)x^u.
\]
Therefore, we have
\[
F_{\xi,m,\mu}(x)=\left(\prod_{i=0}^L \sum_{j_i\ge 0}(-1)^{j_i}\binom{\alpha_i-j_i}{j_i}x^{j_i}\right)
\left(\sum_{u\ge 0}B_m(u)x^u\right)^k.
\]
Expanding the product and collecting the coefficient of $x^r$ gives
\[
a_r=
\sum_{\substack{j_0,\dots,j_L\ge 0\\ u_1,\dots,u_k\ge 0\\
 j_0+\cdots+j_L+u_1+\cdots+u_k=r}}
(-1)^{j_0+\cdots+j_L}
\left(\prod_{i=0}^L \binom{\alpha_i-j_i}{j_i}\right)
\left(\prod_{\nu=1}^k B_m(u_\nu)\right),
\]
which is the first assertion.

For the equivalent signed description, we interpret each factor combinatorially. By
Lemma~\ref{lem:matchingcount}, the number of choices for a matching $M_i$ in $P_{\alpha_i}$ with
$|M_i|=j_i$ is exactly $\binom{\alpha_i-j_i}{j_i}$. By definition of $B_m(u_\nu)$, the number of
choices for a full-height strip walk $\gamma_\nu$ with excess $u_\nu$ is exactly $B_m(u_\nu)$.
Thus each summand in the displayed formula counts tuples
\[
(M_0,\dots,M_L,\gamma_1,\dots,\gamma_k)
\]
with total weight
\[
|M_0|+\cdots+|M_L|+e(\gamma_1)+\cdots+e(\gamma_k)=r,
\]
and sign
\[
(-1)^{|M_0|+\cdots+|M_L|}.
\]
Summing over all allowable weight vectors yields exactly the signed count stated in the theorem.
\end{proof}

\section{When the signed formula is manifestly positive}

In this section we prove Theorem~\ref{thm:manifest}. The key point is that the strip-walk
quotients from Corollary~\ref{cor:walkquotient} are equivalent to families of bounded Dyck
paths after a simple and explicit bijection.

\begin{proposition}\label{prop:dyck-bijection}
Fix $m\ge 1$ and integers $0\le a,b\le m-1$ with $a+b\le m-1$. For each $u\ge 0$, let
$\mathcal W_m(a,b;u)$ be the set of strip walks of height $m-1$ from $a$ to $m-1-b$ and length
\[
m-1-a-b+2u.
\]
For $\gamma=(h_0,h_1,\dots,h_L)\in \mathcal W_m(a,b;u)$, let $w(\gamma)$ denote the word in
$\{U,D\}$ obtained by recording an up-step when $h_{i+1}=h_i+1$ and a down-step when
$h_{i+1}=h_i-1$. Then the map
\[
\Phi_{a,b,u}:\mathcal W_m(a,b;u)\longrightarrow \mathcal D_m(a,b;u),
\qquad
\Phi_{a,b,u}(\gamma):=U^a\,w(\gamma)\,D^{m-1-b},
\]
is a bijection.
\end{proposition}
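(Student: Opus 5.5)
We verify directly that $\Phi_{a,b,u}$ is well defined, injective, and surjective, by working with the height profile of the concatenated word.

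\emph{Well defined.} Let $\gamma=(h_0,\dots,h_L)\in\mathcal W_m(a,b;u)$, so $L=m-1-a-b+2u$. Read $U^a\,w(\gamma)\,D^{m-1-b}$ as a lattice path from height $0$.
\begin{itemize}
\item[$\circ$] The prefix $U^a$ climbs through heights $0,1,\dots,a$. Since $a\le m-1$, these heights lie in $[0,m-1]$.
\item[$\circ$] The middle segment then follows exactly the heights $h_0=a,h_1,\dots,h_L=m-1-b$, all of which lie in $[0,m-1]$ by the strip condition.
\item[$\circ$] The suffix $D^{m-1-b}$ descends from $m-1-b$ to $0$ through heights in $[0,m-1-b]$.
\end{itemize}
So the path never goes below the axis, has height at most $m-1$, and ends at $0$. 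It is therefore a Dyck path. Its length is
\[
a+(m-1-a-b+2u)+(m-1-b)=2(m-1-b+u),
\]
so its semilength is $m-1-b+u$. Its first $a$ steps are up-steps and its last $m-1-b$ steps are down-steps. Hence the image lies in $\mathcal D_m(a,b;u)$.

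\emph{Injective.} The word $w(\gamma)$ together with the fixed start $h_0=a$ determines $\gamma$. Stripping the fixed prefix of length $a$ and the fixed suffix of length $m-1-b$ from $\Phi_{a,b,u}(\gamma)$ recovers $w(\gamma)$, so $\Phi_{a,b,u}$ is injective.

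\emph{Surjective.} Let $P\in\mathcal D_m(a,b;u)$. Delete its first $a$ steps (all $U$) and its last $m-1-b$ steps (all $D$). This leaves a middle word of length
\[
2(m-1-b+u)-a-(m-1-b)=m-1-a-b+2u=L .
\]
This length is nonnegative because $a+b\le m-1$, so the prefix and suffix do not overlap.
\begin{itemize}
\item[$\circ$] After the prefix, $P$ sits at height $a$.
\item[$\circ$] Before the suffix, $P$ must sit at height $m-1-b$, since the $m-1-b$ down-steps end at $0$.
\item[$\circ$] Every intermediate height of $P$ lies in $[0,m-1]$, because $P$ is a Dyck path of height at most $m-1$.
\end{itemize}
So the middle word, started at $a$, traces a strip walk of height $m-1$ from $a$ to $m-1-b$ of length $L$. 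This walk $\gamma$ lies in $\mathcal W_m(a,b;u)$ and satisfies $\Phi_{a,b,u}(\gamma)=P$.

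The only points needing care are the nonnegativity of the middle length and the matching of the intermediate heights, and both follow from $a+b\le m-1$.
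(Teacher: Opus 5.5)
Your proof is correct and follows the paper's argument essentially verbatim: you check that $U^a\,w(\gamma)\,D^{m-1-b}$ is a Dyck path of height at most $m-1$ and semilength $m-1-b+u$, and you recover the walk by deleting the fixed initial and terminal blocks. The only cosmetic difference is that you split the inverse into separate injectivity and surjectivity checks and note explicitly that the prefix and suffix cannot overlap, whereas the paper simply observes that the deletion map is inverse to $\Phi_{a,b,u}$.
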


\begin{proof}
Let $\gamma=(h_0,h_1,\dots,h_L)\in \mathcal W_m(a,b;u)$. Since $\gamma$ starts at height $a$,
ends at height $m-1-b$, and stays between heights $0$ and $m-1$, the word $w(\gamma)$ describes a
lattice path that begins at height $a$, ends at height $m-1-b$, and remains inside the strip
$0\le y\le m-1$. After prepending $a$ up-steps and appending $m-1-b$ down-steps, the resulting
path never goes below the $x$-axis and never rises above height $m-1$. Thus
$\Phi_{a,b,u}(\gamma)$ is a Dyck path of height at most $m-1$.

Its total length is
\[
a+(m-1-a-b+2u)+(m-1-b)=2(m-1-b+u),
\]
so its semilength is $m-1-b+u$. By construction, its first $a$ steps are up-steps and its last
$m-1-b$ steps are down-steps. Hence $\Phi_{a,b,u}(\gamma)$ belongs to $\mathcal D_m(a,b;u)$.

Conversely, let $P\in \mathcal D_m(a,b;u)$. By definition, the first $a$ steps of $P$ are
up-steps and the last $m-1-b$ steps are down-steps. Delete those initial and terminal blocks. The
remaining middle segment starts at height $a$, ends at height $m-1-b$, and stays in the strip
$0\le y\le m-1$. Its length is
\[
2(m-1-b+u)-a-(m-1-b)=m-1-a-b+2u,
\]
so it lies in $\mathcal W_m(a,b;u)$. This construction is plainly inverse to
$\Phi_{a,b,u}$, so $\Phi_{a,b,u}$ is a bijection.
\end{proof}

\begin{proof}[Proof of Theorem~\ref{thm:manifest}]
Assume first that we are given integers $(a_\nu,b_\nu)$ for $1\le \nu\le k$ with
\[
0\le a_\nu,b_\nu\le m-1,\qquad a_\nu+b_\nu\le m-1,
\]
and
\[
\prod_{i=0}^L p_{\alpha_i}(x)=\prod_{\nu=1}^k p_{a_\nu}(x)p_{b_\nu}(x).
\]
Then, we have
\[
F_{\xi,m,\mu}(x)=\prod_{\nu=1}^k \frac{p_{a_\nu}(x)p_{b_\nu}(x)}{p_m(x)}.
\]
Fix $\nu$. Since $a_\nu+b_\nu\le m-1$, Corollary~\ref{cor:walkquotient} with
\[
a=a_\nu,\qquad b=m-1-b_\nu
\]
gives
\[
\frac{p_{a_\nu}(x)p_{b_\nu}(x)}{p_m(x)}
=
\sum_{u\ge 0} w_{m-1-a_\nu-b_\nu+2u}^{(m)}(a_\nu,m-1-b_\nu)\,x^u.
\]
By Proposition~\ref{prop:dyck-bijection}, the coefficient on the right is exactly
$D_m(a_\nu,b_\nu;u)$. Therefore
\[
\frac{p_{a_\nu}(x)p_{b_\nu}(x)}{p_m(x)}
=
\sum_{u\ge 0} D_m(a_\nu,b_\nu;u)\,x^u.
\]
Multiplying these $k$ identities proves the product formula
\[
F_{\xi,m,\mu}(x)=\prod_{\nu=1}^k\left(\sum_{u\ge 0} D_m(a_\nu,b_\nu;u)\,x^u\right).
\]
Now expand the product and collect the coefficient of $x^r$. One obtains
\[
a_r=
\sum_{\substack{u_1,\dots,u_k\ge 0\\ u_1+\cdots+u_k=r}}
\prod_{\nu=1}^k D_m(a_\nu,b_\nu;u_\nu).
\]
This is precisely the number of $k$-tuples $(P_1,\dots,P_k)$ such that
$P_\nu\in \mathcal D_m(a_\nu,b_\nu;u_\nu)$ for each $\nu$ and
$u_1+\cdots+u_k=r$. In particular, all coefficients $a_r$ are nonnegative.

We now prove the three explicit families.

For part~\textup{(a)}, let $\xi=(m^t,1^s)$ and write $s=qm+\rho$ with $q\in\mathbb Z_{\ge 0}$ and $0\le \rho<m$.
Then
\[
\mu=s+tm=(t+q)m+\rho,
\]
so
\[
\mu_1=t+q,\qquad \mu_0=\rho.
\]
Since $p_1(x)=1$, we have $p_\xi(x)=p_m(x)^t$, and therefore
\[
F_{\xi,m,\mu}(x)=\frac{p_{m-\rho-1}(x)}{p_m(x)^{q+1}}
=
\frac{p_{m-\rho-1}(x)}{p_m(x)}
\left(\frac{1}{p_m(x)}\right)^q.
\]
The first factor is the case $(a,b)=(0,m-\rho-1)$ of the general criterion, while each factor
$1/p_m(x)$ is the case $(a,b)=(0,0)$. Hence
\[
\frac{p_{m-\rho-1}(x)}{p_m(x)}=\sum_{u\ge 0} D_m(0,m-\rho-1;u)\,x^u
\]
and
\[
\frac{1}{p_m(x)}=\sum_{u\ge 0} D_m(0,0;u)\,x^u.
\]
Multiplying these series and extracting the coefficient of $x^N$ gives exactly the tuple count
stated in part~\textup{(a)}.

For part~\textup{(b)}, let $\xi=(m^t,r,1^s)$ with $1\le r\le m-1$, and write
$r+s=qm+\rho$ with $q\in\mathbb Z_{\ge 0}$ and $0\le \rho<m$.

Assume first that $q=0$, so $r+s<m$ and $\rho=r+s$. Then
\[
\mu=r+s+tm=tm+\rho,
\qquad \mu_1=t,
\qquad \mu_0=\rho=r+s.
\]
Again $p_1(x)=1$, so
\[
F_{\xi,m,\mu}(x)=\frac{p_{m-r-s-1}(x)p_r(x)}{p_m(x)}.
\]
This is the single-factor case of the general criterion with
\[
a=r,\qquad b=m-r-s-1.
\]
Therefore, we have
\[
F_{\xi,m,\mu}(x)=\sum_{u\ge 0} D_m(r,m-r-s-1;u)\,x^u,
\]
which is exactly the claimed Dyck-path description.

Now assume $q\ge 1$. Then
\[
\mu=r+s+tm=(t+q)m+\rho,
\qquad \mu_1=t+q,
\qquad \mu_0=\rho,
\]
and therefore
\[
F_{\xi,m,\mu}(x)=\frac{p_{m-\rho-1}(x)p_r(x)}{p_m(x)^{q+1}}
=
\frac{p_{m-\rho-1}(x)}{p_m(x)}\cdot
\frac{p_r(x)}{p_m(x)}\cdot
\left(\frac{1}{p_m(x)}\right)^{q-1}.
\]
These factors correspond respectively to $(a,b)=(0,m-\rho-1)$, $(a,b)=(r,0)$, and
$(a,b)=(0,0)$. Hence, we have
\[
\frac{p_{m-\rho-1}(x)}{p_m(x)}=\sum_{u\ge 0} D_m(0,m-\rho-1;u)\,x^u,
\]
\[
\frac{p_r(x)}{p_m(x)}=\sum_{u\ge 0} D_m(r,0;u)\,x^u,
\]
\[
\frac{1}{p_m(x)}=\sum_{u\ge 0} D_m(0,0;u)\,x^u.
\]
Multiplying these series and extracting the coefficient of $x^N$ gives the tuple count stated in
part~\textup{(b)}.

For part~\textup{(c)}, let $\xi=(m^t,r_1,\dots,r_d,1^s)$ with $1\le r_i\le m-1$, and write
\[
r_1+\cdots+r_d+s=qm+\rho,\qquad q\in\mathbb Z_{\ge 0},\quad 0\le \rho<m.
\]
Then we have
\[
\mu=r_1+\cdots+r_d+s+tm=(t+q)m+\rho,
\]
so
\[
\mu_1=t+q,\qquad \mu_0=\rho.
\]
Because $p_1(x)=1$, we have
\[
F_{\xi,m,\mu}(x)=\frac{p_{m-\rho-1}(x)\prod_{i=1}^d p_{r_i}(x)}{p_m(x)^{q+1}}.
\]
If $q\ge d$, then we may rewrite this as
\[
F_{\xi,m,\mu}(x)=\frac{p_{m-\rho-1}(x)}{p_m(x)}
\prod_{i=1}^d \frac{p_{r_i}(x)}{p_m(x)}
\left(\frac{1}{p_m(x)}\right)^{q-d}.
\]
These factors correspond to $(a,b)=(0,m-\rho-1)$, to $(a,b)=(r_i,0)$ for $1\le i\le d$,
and to $(a,b)=(0,0)$ for the remaining factors. Therefore
\[
\frac{p_{m-\rho-1}(x)}{p_m(x)}=\sum_{u\ge 0} D_m(0,m-\rho-1;u)\,x^u,
\]
\[
\frac{p_{r_i}(x)}{p_m(x)}=\sum_{u\ge 0} D_m(r_i,0;u)\,x^u
\qquad (1\le i\le d),
\]
\[
\frac{1}{p_m(x)}=\sum_{u\ge 0} D_m(0,0;u)\,x^u.
\]
Multiplying these series and extracting the coefficient of $x^N$ gives the tuple count stated in
part~\textup{(c)}.
\end{proof}

\begin{proof}[Proof of Corollary~\ref{cor:manifest-mult}]
When $n\ge 0$, the displayed coefficient formula in each case is obtained by applying
Proposition~\ref{prop:cheb-quotient} with
\[
n=|\xi|-2N.
\]
If $n<0$, the multiplicity is interpreted as zero as stated; in the subcases below this agrees
with the displayed coefficient, because the same degree estimates show that the relevant
coefficient vanishes when the Euclidean quotient $q$ is negative. We then rewrite the resulting
quotient using the Euclidean division displayed in the statement. We also use the elementary bound
$\deg p_a\le a/2$, which follows immediately from the recurrence defining $p_a$.

For part~\textup{(a)}, write
\[
s-2N=qm+\rho,\qquad q\in\mathbb Z,\quad 0\le \rho<m.
\]
Then we have
\[
tm+s-2N=(t+q)m+\rho,
\]
so Proposition~\ref{prop:cheb-quotient} gives
\[
V_{tm+s-2N}^{\xi\to m}(1)
=
[x^N]\frac{p_{m-\rho-1}(x)}{p_m(x)^{q+1}}.
\]
If $q\ge 0$, this is exactly the quotient appearing in Theorem~\ref{thm:manifest}(a), so the
stated bounded-Dyck-path interpretation follows by extracting the coefficient of $x^N$.
If $q<0$, write $q=-h-1$ with $h\ge 0$. Then
\[
\frac{p_{m-\rho-1}(x)}{p_m(x)^{q+1}}=p_{m-\rho-1}(x)p_m(x)^h
\]
is a polynomial of degree at most
\[
\frac{m-\rho-1+hm}{2}=\frac{(h+1)m-\rho-1}{2}<N,
\]
because
\[
2N=s+(h+1)m-\rho.
\]
Hence, the coefficient of $x^N$ is zero.

For part~\textup{(b)}, write
\[
r+s-2N=qm+\rho,\qquad q\in\mathbb Z,\quad 0\le \rho<m.
\]
Then we have
\[
tm+r+s-2N=(t+q)m+\rho,
\]
so Proposition~\ref{prop:cheb-quotient} gives
\[
V_{tm+r+s-2N}^{\xi\to m}(1)
=
[x^N]\frac{p_r(x)p_{m-\rho-1}(x)}{p_m(x)^{q+1}}.
\]
If $q\ge 1$, we factor the quotient as
\[
\frac{p_{m-\rho-1}(x)}{p_m(x)}\cdot \frac{p_r(x)}{p_m(x)}\cdot
\left(\frac1{p_m(x)}\right)^{q-1}
\]
and apply the general criterion in Theorem~\ref{thm:manifest} with the pairs
\[
(0,m-\rho-1),\qquad (r,0),\qquad (0,0),\dots,(0,0).
\]
This yields the stated tuple model.

Now assume $q=0$. Then
\[
\rho=r+s-2N,
\qquad
m-\rho-1=m-r-s+2N-1.
\]
If $2N\le s$, then $m-r-s+2N-1=m-\rho-1$ lies between $0$ and $m-1$, and
\[
r+(m-r-s+2N-1)=m-s+2N-1\le m-1.
\]
Thus Theorem~\ref{thm:manifest} applies with the single pair
\[
(a_1,b_1)=(r,m-r-s+2N-1),
\]
and therefore
\[
\frac{p_r(x)p_{m-\rho-1}(x)}{p_m(x)}=
\sum_{u\ge 0} D_m(r,m-r-s+2N-1;u)x^u.
\]
Extracting the coefficient of $x^N$ gives the stated formula.
If instead $q<0$, write $q=-h-1$ with $h\ge 0$. Then
\[
\frac{p_r(x)p_{m-\rho-1}(x)}{p_m(x)^{q+1}}=p_r(x)p_{m-\rho-1}(x)p_m(x)^h
\]
is a polynomial of degree at most
\[
\frac{r+(m-\rho-1)+hm}{2}=\frac{r+(h+1)m-\rho-1}{2}<N,
\]
because
\[
2N=r+s+(h+1)m-\rho.
\]
Hence the coefficient of $x^N$ is zero.
The only remaining subcase is $q=0$ and $2N>s$, and there we make no unsigned Dyck-path claim.

For part~\textup{(c)}, write
\[
r_1+\cdots+r_d+s-2N=qm+\rho,
\qquad q\in\mathbb Z,\quad 0\le \rho<m.
\]
Then Proposition~\ref{prop:cheb-quotient} gives
\[
V_{tm+r_1+\cdots+r_d+s-2N}^{\xi\to m}(1)
=
[x^N]\frac{p_{m-\rho-1}(x)\prod_{i=1}^d p_{r_i}(x)}{p_m(x)^{q+1}}.
\]
If $q\ge d$, we rewrite this quotient as
\[
\frac{p_{m-\rho-1}(x)}{p_m(x)}
\prod_{i=1}^d \frac{p_{r_i}(x)}{p_m(x)}
\left(\frac1{p_m(x)}\right)^{q-d}
\]
and apply Theorem~\ref{thm:manifest} with the pairs
\[
(0,m-\rho-1),\qquad (r_1,0),\dots,(r_d,0),\qquad (0,0),\dots,(0,0).
\]
This yields the stated tuple model.
If $q<0$, write again $q=-h-1$ with $h\ge 0$. Then the quotient is the polynomial
\[
p_{m-\rho-1}(x)\prod_{i=1}^d p_{r_i}(x)p_m(x)^h,
\]
of degree at most
\[
\frac{(m-\rho-1)+r_1+\cdots+r_d+hm}{2}
=\frac{r_1+\cdots+r_d+(h+1)m-\rho-1}{2}<N,
\]
because
\[
2N=r_1+\cdots+r_d+s+(h+1)m-\rho.
\]
Hence the coefficient of $x^N$ is zero. This proves the corollary.
\end{proof}

\begin{remark}
We emphasize that an alternative combinatorial model for the fat-hook families considered here was obtained in \cite{BiswalKus2021} in terms of admissible Dyck paths. That construction is different from the present one, which is designed to arise directly from the Chebyshev quotient.

The transfer-matrix and continued-fraction interpretations underlying Corollary~\ref{cor:walkquotient} are classical. General forms for quotients associated with orthogonal polynomials appear, for example, in Krattenthaler~\cite[Theorem~10.11.1]{KrattenthalerHEC}; see also Chapter~V of Viennot's monograph~\cite{Viennot}. Determinant-style proofs of this type are standard, while alternative combinatorial proofs using heaps are discussed in \cite{CK}.

In \cite{BiswalKus2021}, a co-major index statistic on admissible Dyck paths was shown to capture the full graded multiplicities. In contrast, for the combinatorial model introduced in this paper, the appropriate statistic that realizes the full graded multiplicities is not yet known. Determining such a statistic remains an interesting open problem.
\end{remark}

\section{Appendix: Autonomous proof production and formalization}\label{sec:AxiomProver}

At Axiom Math, we are developing AxiomProver, an AI system for mathematical research based on autoformalization. As a test case, we gave AxiomProver two tasks\footnote{The formalization of Theorem~\ref{thm:main} was completed before the authors had finalized the content of Theorems~\ref{thm:comb} and ~\ref{thm:manifest}, which explains the existence of two separate tasks.} of autoformalizing Theorem~\ref{thm:main},~\ref{thm:comb}  and~\ref{thm:manifest} in this paper, offering examples of AI assistance in mathematical research.  This appendix is separate from the rest of the manuscript. The prose exposition of this paper, including this appendix, was written without the use of AI. 
AxiomProver runs Lean,
 an interactive theorem prover and a functional programming language built on dependent type theory, designed to provide a rigorous computational framework for validating mathematical proofs \cite{Lean}. 
This appendix is included not only to record that formal proofs were produced,
but also to clarify the division of labor between the human-written
mathematics and the AI-generated formalizations.  The
mathematical statements supplied to AxiomProver were written in natural
language by the authors. AxiomProver's role was to convert these statements
into Lean and to construct machine-checkable formal proofs. Thus, in the tasks
described below, AxiomProver was not being asked to choose the main theorems
or to decide the final organization of the paper; it was being asked to
produce complete formal proofs of the supplied statements.

 Autoformalization involves automatically and autonomously  converting natural-language mathematics into machine-verifiable formal language. Lean files are created to pass type checkers, while the natural language papers aim to communicate ideas to readers.
Given problems in natural or formal language, AxiomProver attempts to generate a complete formal proof. When it succeeds, the system produces two files:
\begin{itemize}
  \item \texttt{problem.lean}, which formalizes the problem statement if a formal problem statement is absent;
  \item \texttt{solution.lean}, which represents a complete proof in a formal language.
\end{itemize}

We now describe the two tasks in more detail:

\noindent
{\bf Task 1 (Theorem~\ref{thm:main}).} We give the following files as input for AxiomProver:
\begin{itemize}
    \item \texttt{task.md} contains the informal statement of Theorem~\ref{thm:main}.
    \item \texttt{.environment} specifies the version used is 4.28.0.
\end{itemize}
\smallskip
\noindent{\bf Task 2 (Theorem~\ref{thm:comb} and~\ref{thm:manifest}).} We give the following files as input for AxiomProver:
\begin{itemize}
    \item \texttt{source.tex} contains the informal statement of Theorem~\ref{thm:main},~\ref{thm:comb} and~\ref{thm:manifest}.
    \item \texttt{theorem1.lean} is a verbatim copy of the output from Task 1.
    \item \texttt{task.md} instructs AxiomProver to formalize Theorem~\ref{thm:comb} and~\ref{thm:manifest} and informs AxiomProver that \texttt{theorem1.lean} is a formalization of Theorem~\ref{thm:main}.
    \item \texttt{.environment} specifies the version used is 4.28.0.
\end{itemize}

The two tasks should therefore be viewed as sequential, but not as a case in
which the first task already contained the full content of the second. Task~1
concerned only the eventual-positivity dichotomy of Theorem~\ref{thm:main}. Its output,
\texttt{theorem1.lean}, was supplied to Task~2 as a verified formal ingredient
and as context for the notation and hypotheses already formalized. Task~2 then
addressed the separate combinatorial assertions in Theorems~\ref{thm:comb} and~\ref{thm:manifest}. In
particular, the formalization of Theorem~\ref{thm:manifest} was not merely extracted from the
formal proof of Theorem~\ref{thm:main}; it required AxiomProver to formalize additional
objects and arguments, including the matching and bounded-walk interpretations
and the quotient identities used in the unsigned Dyck-path cases.

At the time Task~2 was run, the detailed natural-language statements of
Theorems~\ref{thm:comb} and~\ref{thm:manifest} had already been formulated by the authors and were
included in \texttt{source.tex}. AxiomProver was therefore not used to discover
the statement of Theorem~\ref{thm:manifest}. Rather, its contribution was to produce a
machine-checkable Lean formalization and proof of the already supplied
statement, using the formalization of Theorem~\ref{thm:main} from Task~1 as an available
input.

In both cases, AxiomProver autonomously produced Lean files that were accepted
by the Lean type checker. Here ``autonomously'' means that, after receiving the
specified input files, AxiomProver generated the corresponding
\texttt{problem.lean} and \texttt{solution.lean} files without further
mathematical intervention in the proof script. The authors did not hand-write
or repair the Lean proofs line by line. The resulting formal proofs were then
used as machine-checkable certificates for the stated theorems. The relevant
files are posted in the following repository\footnote{The version used is
4.28.0. Compatibility with other versions is not guaranteed due to the evolving
nature of the Lean~4 compiler and its core libraries.}:
\[
\texttt{https://github.com/AxiomMath/Biswal}.
\]

At first glance, the proofs generated by AxiomProver do not resemble the narrative outlined in this paper. Converting a Lean file into a proof understandable by humans is challenging because Lean is designed as code for a type-checker, not as a reader-friendly explanation. It makes all the ``obvious'' bookkeeping explicit, such as rewrite steps, coercions, side conditions, and case splits, and tends to follow the most convenient lemmas and tactics for the library, rather than the most clear conceptual route. A mathematician can usually condense this significantly by relying on shared historical context, standard arguments, and informal identifications that Lean cannot assume. As a result, writing a paper from Lean files is not just about reformatting. The authors must understand the formal script, reconstruct the underlying ideas, and then translate the code into a narrative that emphasizes the key insights while safely omitting routine details.


\medskip


\section*{Acknowledgments}

The authors are grateful to Christian Krattenthaler and Dennis Stanton
for helpful comments and references regarding quotients of orthogonal
polynomials, bounded lattice path models, and positivity phenomena.
Their observations helped improve the exposition and historical context
of this paper.

\end{document}